\definecolor{emerlandgreen}{HTML}{2ECC71}
\definecolor{peterriverblue}{HTML}{3498DB}
\definecolor{alizarinred}{HTML}{E74C3C}
\definecolor{lstgreen}{rgb}{0,0.5,0}
\Crefname{equation}{}{}
\Crefname{conditioni}{Condition}{Conditions}
\Crefname{conditionalti}{Condition}{Conditions}
\def\@endtheorem{\endtrivlist}
\newtheorem{theorem}{Theorem}[section]
\Crefname{theorem}{Theorem}{Theorems}
\newtheorem{lemma}[theorem]{Lemma}
\Crefname{lemma}{Lemma}{Lemmas}
\newtheorem{proposition}[theorem]{Proposition}
\Crefname{proposition}{Proposition}{Propositions}
\newtheorem{corollary}[theorem]{Corollary}
\Crefname{corollary}{Corollary}{Corollaries}
\Crefname{conjecture}{Conjecture}{Conjectures}
\Crefname{assumption}{Assumption}{Assumptions}
\theoremstyle{definition}
\newtheorem{definition}{Definition}[section]
\Crefname{definition}{Definition}{Definitions}
\Crefname{question}{Question}{Questions}
\theoremstyle{remark}
\newtheorem{remark}{Remark}
\Crefname{remark}{Remark}{Remarks}
\newtheorem{example}{Example}
\Crefname{example}{Example}{Examples}
\newtheorem*{example*}{Example}
\numberwithin{equation}{section}
\DeclareMathAlphabet{\pazocal}{OMS}{zplm}{m}{n}
\newcommand{\dE}{\mathbb{E}}
\newcommand{\dN}{\mathbb{N}}
\newcommand{\dP}{\mathbb{P}}
\newcommand{\dR}{\mathbb{R}}
\newcommand{\dS}{\mathbb{S}}
\newcommand{\cC}{\pazocal{C}}
\newcommand{\cL}{\pazocal{L}}
\newcommand{\CE}{\mathcal{E}}
\newcommand{\dd}{\mathrm{d}}
\DeclareMathOperator{\e}{e}
\DeclareMathOperator{\Ent}{\mathbf{Ent}}
\DeclareMathOperator{\supp}{supp}
\author{Nathaël Gozlan \& Ronan Herry}
\title{Multiple sets exponential concentration and higher order eigenvalues}
\begin{document}

\begin{abstract}
  On a generic metric measured space, we introduce a notion of improved concentration of measure that takes into account the parallel enlargement of $k$ distinct sets.
  We show that the $k$-th eigenvalues of the metric Laplacian gives exponential improved concentration with $k$ sets.
  On compact Riemannian manifolds, this allows us to recover estimates on the eigenvalues of the Laplace-Beltrami operator in the spirit of an inequality of~\cite{ChungGrigoryanYau1996}.
\end{abstract}
\maketitle

\tableofcontents

\section*{Introduction}

Let $(M, g)$ be a smooth compact connected Riemannian manifold with its normalized volume measure $\mu$ and its geodesic distance $d$.
The Laplace-Beltrami operator $\Delta$ is then a non-positive operator whose spectrum is discrete.
Let us denote by $\lambda^{(k)}$, $k=0,1,2\ldots$, the eigenvalues of $-\Delta$ written in increasing order.
With these notations $\lambda^{(0)}=0$ (achieved for constant functions) and (by connectedness) $\lambda^{(1)}>0$ is the so-called spectral gap of $M$.

The study of the spectral gap of Riemannian manifolds is, by now, a very classical topic which has found important connections with numerous geometrical and analytical questions and properties.
The spectral gap constant $\lambda^{(1)}$ is for instance related to Poincaré type inequalities and governs the speed of convergence of the heat flow to equilibrium.
It is also related to Ricci curvature via the classical Lichnerowicz theorem~\cite{Lichnerowicz1958} and to Cheeger isoperimetric constant via Buser's theorem~\cite{Buser1982}.
We refer to~\cite{BakryGentilLedoux2014, Chavel1984} and the references therein for a complete picture.

Another important property of the spectral gap constant, first observed by Gromov and Milman~\cite{GromovMilman1983}, is that it controls exponential concentration of measure phenomenon for the reference measure $\mu$.
The result states as follows.
Define for all Borel sets $A\subset M$, its $r$-enlargement $A_{r}$ as the (open) set of all $x \in E$ such that there exists $y \in A$ with $d(x, y) < r$.
Then, for any $A \subset M$ such that $\mu(A) \geq 1/2$ it holds 
\[
\mu(A_r) \geq 1- be^{-a\sqrt{\lambda^{(1)}}r},\qquad \forall r>0,
\] 
where $a,b>0$ are some universal constants (according to~\cite[Theorem 3.1]{Ledoux2001}, one can take $b=1$ and $a=1/3$).
Note that this implication is very general and holds on any metric space supporting a Poincaré inequality (see~\cite[Corollary 3.2]{Ledoux2001}).
See also~\cite{BobkovLedoux1997, Schmuckenschlager1998, AidaStroock1994, GozlanRobertoSamson2015} for alternative derivations, generalizations or refinements of this result.

This note is devoted to a multiple sets extension of the above result.
Roughly speaking, we will see that if $A_1,\ldots,A_k$ are sets which are pairwise separated in the sense that $d(A_i,A_j):=\inf\{d(x,y) : x\in A_i, y\in A_j\}>0$ for any $i\neq j$ and $A$ is their union then the probability of $A_r$ goes exponentially fast to $1$ at a rate given by $\sqrt{\lambda^{(k)}}$ as soon as $r$ is such that the sets $A_{i,r}$, $i=1,\ldots,k$ remain separated.
More precisely, it follows from~\cref{main-result} (whose setting is actually more general) that, if $A_1,\ldots, A_k$ are such that $\mu(A_i)\geq \frac{1}{k+1}$ and $d(A_{i,r}, A_{j,r})>0$ for all $i\neq j$, then, denoting $A=A_1\cup\ldots \cup A_k$, it holds
\begin{equation}\label{eq:conc-intro}
\mu(A_r)\geq 1 - \frac{1}{k+1} \exp\left(- c \min(r^2 \lambda^{(k)} ; r\sqrt{\lambda^{(k)}})\right),
\end{equation}
for some universal constant $c$.
This kind of probability estimates first appeared, in a slightly different but essentially equivalent formulation in the work of Chung, Grigor'yan and Yau~\cite{ChungGrigoryanYau1996,ChungGrigoryanYau1997} (see also the related paper~\cite{FriedmanTillich2000} by Friedman and Tillich).
Nevertheless, the method of proof we use to arrive at~\eqref{eq:conc-intro} (based on the Courant-Fischer min-max formula for the $\lambda^{(k)}$'s) is quite different from the one of~\cite{ChungGrigoryanYau1996,ChungGrigoryanYau1997} and seems more elementary and general.
This is discussed in details in~\cref{sec:comparison}.
 
The paper is organized as follows.
In~\cref{section:concentration}, we prove~\eqref{eq:conc-intro} in an abstract metric space framework.
This framework contains, in particular, the compact Riemannian case equipped with the Laplace operator presented above.
The~\cref{sec:comparison} contains a detailed discussion of our result with the one of Chung, Grigor'yan \& Yau.
In~\cref{sec:examples}, we recall various bounds on eigenvalues on several non-negatively curved manifolds.
\cref{section:Markov} gives an extension of~\eqref{eq:conc-intro} to discrete Markov chains on graphs.
In~\cref{sec:functional}, we give a functional formulation of the results of~\cref{section:concentration,section:Markov}.
As a corollary of this functional formulation, we obtain a deviation inequality as well as an estimate for difference of two Lipschitz extensions of a Lipschitz function given on $k$ subsets.
Finally,~\cref{section:questions} discusses open questions related to this type of concentration of measure phenomenon.

\section{Multiple sets exponential concentration in abstract spaces}\label{section:concentration}
\subsection{Courant-Fischer formula and generalized eigenvalues in metric spaces}
Let us recall the classical Courant-Fischer min-max formula for the $k$-th eigenvalue ($k \in \dN$) of $-\Delta$, noted $\lambda^{(k)}$, on a compact Riemannian manifold $(M,g)$ equipped with its (normalized) volume measure $\mu$: 
\begin{equation}\label{CFmanifold}
\lambda^{(k)} = \inf_{\substack{V \subset \cC^{\infty}(M)\\\dim V = k+1}} \sup_{f \in V \setminus \{0\}} \frac{\int |\nabla f|^2\,\dd\mu}{\int f^{2}\,\dd\mu},
\end{equation}
where $\nabla f$ is the Riemannian gradient, defined through the Riemannian metric $g$ (see \textit{e.g}~\cite{Chavel1984}) and $|\nabla f|^{2} = g(\nabla f, \nabla f)$.
The formula~\eqref{CFmanifold} above does not make explicitly reference to the differential operator $\Delta$.
It can be therefore easily generalized to a more abstract setting, as we shall see below.

In all what follows, $(E,d)$ is a complete, separable metric space and $\mu$ a reference Borel probability measure on $E$.
Following~\cite{CheegerLipschitz}, for any function $f \colon E \to \dR$ and $x \in E$, we denote by $|\nabla f| (x)$ the \emph{local Lipschitz constant} of $f$ at $x$, defined by
\[
  |\nabla f|(x) = 
  \begin{cases}
    0 \ \text{if $x$ is isolated}\\
    \limsup_{y \to x} \frac{|f(x)-f(y)|}{d(x,y)} \ \text{otherwise}.
  \end{cases}
\]
Note that when $E$ is a smooth Riemannian manifold, equipped with its geodesic distance $d$, then, the local Lipschitz constant of a differentiable function $f$ at $x$ coincides with the norm of $\nabla f(x)$ in the tangent space $T_{x}E$.
With this notion in hand, a natural generalization of~\eqref{CFmanifold} is as follows (we follow~\cite[Definition 3.1]{Milman2015}):
\begin{equation}\label{metric_eigenvalues}
\lambda^{(k)}_{d,\mu} := \inf_{\substack{V \subset H^1(\mu)\\\dim V = k+1}} \sup_{f \in V \setminus \{0\}} \frac{\int |\nabla f|^2\,\dd\mu}{\int f^{2}\,\dd\mu}, \qquad k\geq 0,
\end{equation}
where $H^1(\mu)$ denotes the space of functions $f \in L^2(\mu)$ such that $\int |\nabla f|^2\,\dd\mu <+\infty$.
In order to avoid heavy notations, we drop the subscript and we simply write $\lambda^{(k)}$ instead of $\lambda^{(k)}_{d,\mu}$ within this section.

\subsection{Statement of the main results}
To state our first main result, we need further notations: for any $k\geq 1$, we denote by $\Delta_{k}$ the set of vectors $(a_{1},\ldots,a_{k}) \in {[0,1]}^{k}$ satisfying the following linear constraints
\[
\sum_{j=1}^{k} a_j \leq 1 \qquad \text{and} \qquad a_i + \sum_{j=1}^{k} a_{j} \geq 1, \, \forall i \in \{1,\ldots, k\}.
\]
Recall the classical notation $d(A,B) = \inf \{d(x,y) : x \in A, y\in B\}$ of the distance between two sets $A,B \subset E$.

The following theorem is the main result of the paper and is proved in~\cref{section:proofs}.

\begin{theorem}\label{main-result}
There exists a universal constant $c>0$ such that, for any $k\geq 1$ and for all sets $A_1,\ldots,A_k \subset E$ such that $\min_{i\neq j} d(A_i,A_j)>0$ and $(\mu(A_1),\ldots,\mu(A_k)) \in \Delta_k$, the set $A=A_1\cup A_2\cup \cdots \cup A_k$ satisfies
\[
\mu(A_r) \geq 1 - (1-\mu(A)) \exp\left( - c \min(r^2 \lambda^{(k)} ; r\sqrt{\lambda^{(k)}})   \right),
\]
for all $0< r \leq \frac{1}{2}\min_{i\neq j} d(A_i,A_j)$, where $\lambda^{(k)}\geq0$ is defined by~\eqref{metric_eigenvalues}.
\end{theorem}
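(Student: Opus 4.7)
The proof rests on the Courant--Fischer variational formula~\eqref{metric_eigenvalues}, applied to a carefully chosen $(k+1)$-dimensional subspace of $H^1(\mu)$.

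Set $R = \frac{1}{2}\min_{i\neq j} d(A_i, A_j)$ and fix $r \in (0, R]$. A short triangle-inequality argument shows that the open enlargements $A_{1,r}, \ldots, A_{k,r}$ are pairwise disjoint, and by definition they are all disjoint from $A_r^c$. I therefore pick $k+1$ non-zero Lipschitz functions $f_0, f_1, \ldots, f_k$ supported respectively in $A_r^c, A_{1,r}, \ldots, A_{k,r}$. They span a $(k+1)$-dimensional subspace $V \subset H^1(\mu)$, and the disjointness of their supports gives, for every $f = \sum_i c_i f_i \in V$,
\[
\int f^2 \, \dd\mu = \sum_{i=0}^k c_i^2 \int f_i^2 \, \dd\mu, \qquad \int |\nabla f|^2 \, \dd\mu = \sum_{i=0}^k c_i^2 \int |\nabla f_i|^2 \, \dd\mu.
\]
The Rayleigh quotient $R(f) := \int|\nabla f|^2\,\dd\mu / \int f^2\,\dd\mu$ of $f$ is therefore a convex combination of the $R(f_i)$'s, so $\sup_{f \in V \setminus\{0\}} R(f) = \max_i R(f_i)$, and~\eqref{metric_eigenvalues} yields the master inequality $\lambda^{(k)} \leq \max_{0 \leq i \leq k} R(f_i)$.

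It remains to select the $f_i$'s to extract a bound on $\mu(A_r^c)$. Following the Gromov--Milman tradition, I take radial profiles $f_i(x) = \varphi(d(x,A_i))$ for $i \geq 1$ with $\varphi$ supported in $[0,r]$, and $f_0(x) = \psi(d(x,A))$ with $\psi$ supported in $[r,\infty)$; each Rayleigh quotient $R(f_i)$ can then be bounded in terms of measures of annular shells between the relevant enlargements. The balancing condition $(\mu(A_i))_i \in \Delta_k$, equivalent to $\mu(A_i) \geq \mu(A^c)$ for each $i \geq 1$, is precisely what allows one to convert the master inequality into a bound of the form $\mu(A_r^c) \leq (1-\mu(A))\,\Phi(r,\lambda^{(k)})$. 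The two terms in $\min(r^2\lambda^{(k)}, r\sqrt{\lambda^{(k)}})$ correspond to two choices of profile: a piecewise-linear (triangular) $\varphi, \psi$ yields the short-range quadratic rate, while the long-range linear rate is obtained either from an exponential profile of rate $s \asymp \sqrt{\lambda^{(k)}}$, or, equivalently, by iterating the short-range bound on concentric annuli of width $\asymp 1/\sqrt{\lambda^{(k)}}$ so that the decay telescopes into an exponential.

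\textbf{Main obstacle.} The delicate point is the joint tuning of the profiles $\varphi, \psi$ and the rate parameter $s$ (or the annulus width in the iterative variant): the $k+1$ competing Rayleigh quotients must be controlled simultaneously, and the ``bulk'' contributions $R(f_i)$ for $i \geq 1$ must be absorbed uniformly through the $\Delta_k$ hypothesis. This balancing is what ensures both regimes of the exponent appear with the same universal constant $c$, independent of $k$.
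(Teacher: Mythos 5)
Your outline tracks the same overall strategy as the paper: apply the Courant--Fischer formula~\eqref{metric_eigenvalues} to a $(k+1)$-dimensional space of localized bump functions, use disjointness of supports to diagonalize the Rayleigh quotient, and invoke the $\Delta_k$ hypothesis to close the estimate. The two points you label as the \textit{main obstacle}, however, are exactly where the proof lives, and you leave them unproved.

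The first missing ingredient is a sharp ratio lemma. The paper takes $A_0 := E\setminus A_\epsilon$ and $f_i = \chi_p(d(\cdot, A_i)/\epsilon)$ for $i = 0,\ldots,k$ (so $f_i \equiv 1$ on $A_i$ and $|\nabla f_i|$ is supported on the shell $A_{i,\epsilon}\setminus A_i$), reads off
\[
1 + \lambda^{(k)}\epsilon^2 \leq \max_{0\leq i\leq k} \frac{\mu(A_{i,\epsilon})}{\mu(A_i)} \qquad (\text{after }p\to 1),
\]
and then proves (\cref{lemma:max_of_quotient}) that the $\Delta_k$ condition forces this maximum to be $\leq (1-\mu(A))/(1-\mu(A_\epsilon))$. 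For $i\geq 1$ this rests on the monotone decrease of $u \mapsto (1-u-m_i)u$ on $[(1-m_i)/2,1]$ with $m_i = \sum_{j\neq i}\mu(A_j)$ --- which is precisely where your reformulation $\mu(A_i)\geq\mu(A^c)$ enters, as $\mu(A_i)\geq(1-m_i)/2$; for $i=0$ it rests on \cref{lemma:inclusion}, namely $(E\setminus A_\epsilon)_\epsilon\subset E\setminus A$. Merely observing that $\Delta_k$ is equivalent to $\mu(A_i)\geq\mu(A^c)$ does not by itself produce the one-step contraction $\left(1+\lambda^{(k)}\epsilon^2\right)\left(1-\mu(A_\epsilon)\right)\leq 1-\mu(A)$, and without it you have no handle on the quantity $1-\mu(A_\epsilon)$ appearing in the theorem.

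The second missing ingredient is the iteration, and here your emphasis diverges from the paper's: the paper uses neither a one-shot triangular profile at scale $r$ nor an exponential profile. It establishes the one-step contraction above for a single small $\epsilon$, iterates it $\lfloor r/\epsilon\rfloor$ times, and then optimizes $\lfloor r/\epsilon\rfloor\log(1+\lambda^{(k)}\epsilon^2)$ over $\epsilon\leq r$; this single optimization (\cref{technique}) produces both regimes of $\min(r^2\lambda^{(k)}, r\sqrt{\lambda^{(k)}})$ simultaneously with the same universal constant. Your ``concentric annuli'' alternative is essentially this, but your primary proposal (separate triangular and exponential profiles with a tuned rate $s$) would require a separate verification that the resulting constants are uniform in $k$, which is not obvious and is sidestepped entirely by the iterative scheme. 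To turn the plan into a proof, you should state and prove the analogue of \cref{lemma:max_of_quotient} and make the iteration/optimization explicit.
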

Note that, since $(1/(k+1),\ldots, 1/(k+1)) \in \Delta_k$,~\cref{main-result} immediately implies Inequality~\eqref{eq:conc-intro}.

Inverting our concentration estimate, we obtain the following statement that provides a bound on the $\lambda^{(k)}$'s.

\begin{proposition}\label{proposition:bound-eigenvalue}
  Let $(E,d,\mu)$ be a metric measured space and $\lambda^{(k)}$ be defined as in~\eqref{metric_eigenvalues}.
  Let $A_{1}, \dots, A_{k}$ be measurable sets such that $(\mu(A_{1}), \dots, \mu(A_{k})) \in \Delta_{k}$, then, with $r = \frac{1}{2} \min_{i \ne j} d(A_{i},A_{j})$ and $A_{0} = E \setminus {(\cup A_{i})}_{r}$,
  \[
    \lambda^{(k)} \leq \frac{1}{r^{2}} \psi\left(\frac{1}{c}  \min_{i} \ln \frac{\mu(A_{i})}{\mu(A_{0})} \right),
  \]
  where $\psi(x) = \max(x,x^{2})$.
\end{proposition}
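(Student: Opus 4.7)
The plan is to simply invert the concentration bound of \cref{main-result}. With $r=\tfrac{1}{2}\min_{i\neq j}d(A_i,A_j)$ the hypotheses of \cref{main-result} are satisfied, so, setting $A=\cup_i A_i$, we get
\[
\mu(A_0) \;=\; 1-\mu(A_r) \;\leq\; (1-\mu(A))\,\exp\bigl(-c\min(r^2\lambda^{(k)},\,r\sqrt{\lambda^{(k)}})\bigr).
\]
The next step is to replace $1-\mu(A)$ by $\min_i \mu(A_i)$. Since $\min_{i\neq j}d(A_i,A_j)>0$ the $A_i$'s are pairwise disjoint, hence $\mu(A)=\sum_{j=1}^{k}\mu(A_j)$; but the constraint $(\mu(A_1),\ldots,\mu(A_k))\in\Delta_k$ contains the inequality $\mu(A_i)+\sum_{j=1}^{k}\mu(A_j)\geq 1$ for every $i$, so $\mu(A_i)\geq 1-\mu(A)$ for every $i$, and therefore $1-\mu(A)\leq \min_i\mu(A_i)$.

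Combining the two displays yields $\mu(A_0)\leq \min_i\mu(A_i)\cdot\exp(-c\min(r^2\lambda^{(k)},r\sqrt{\lambda^{(k)}}))$, which in particular forces $\mu(A_0)\leq\min_i\mu(A_i)$, so the quantity $M:=\min_i \ln\frac{\mu(A_i)}{\mu(A_0)}$ is non-negative (the bound being trivial if $\mu(A_0)=0$). Taking logarithms gives
\[
c\,\min\bigl(r^2\lambda^{(k)},\,r\sqrt{\lambda^{(k)}}\bigr) \;\leq\; M.
\]

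It remains to solve this for $\lambda^{(k)}$. If $r\sqrt{\lambda^{(k)}}\leq 1$ the minimum equals $r^2\lambda^{(k)}$, yielding $\lambda^{(k)}\leq M/(cr^2)$; otherwise the minimum equals $r\sqrt{\lambda^{(k)}}$, yielding $\lambda^{(k)}\leq M^2/(c^2 r^2)$. In both cases $\lambda^{(k)}\leq \tfrac{1}{r^2}\max(M/c,(M/c)^2)=\tfrac{1}{r^2}\psi(M/c)$, which is the announced bound. There is no real obstacle: the whole argument is essentially bookkeeping around the two regimes of the $\min$ in \cref{main-result}, the only slightly non-obvious step being the use of the linear constraints defining $\Delta_k$ to upgrade $1-\mu(A)$ into $\min_i \mu(A_i)$.
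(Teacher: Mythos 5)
Your proof is correct and follows essentially the same route as the paper's: apply \cref{main-result} at the maximal admissible radius, use the $\Delta_k$ constraints together with disjointness to replace $1-\mu(A)$ by $\min_i\mu(A_i)$, and then invert the $\phi=\min(x,x^2)$ bound into $\psi=\max(x,x^2)$. The only difference is that you spell out the case analysis behind the inversion, which the paper compresses into a single sentence.
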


\begin{proof}
 Let $A = \cup_{i} A_{i}$.
  Inverting the formula in~\cref{main-result}, we obtain
  \[
    \lambda^{(k)} \leq \frac{1}{r^{2}} \psi\left(\frac{1}{c} \ln \frac{1-\mu(A)}{1-\mu(A_{r})}\right),
   \]
   where $\psi(x) = \max(x,x^{2})$.
  By definition of $\Delta_{k}$,
  \[
    1 - \mu(A) = 1 - \sum_{i} \mu(A_{i}) \leq \min_{i} \mu(A_{i}).
  \]
  Therefore, letting $A_{0} = E \setminus A_{r}$, we obtain the announced inequality by non-decreasing monotonicity of $\psi$ and $\ln$.
\end{proof}

The collection of sets $\Delta_k$, $k\geq 1$ has the following useful stability property:
\begin{lemma}\label{Delta}
Let $I_1, I_2, \ldots, I_n$ be a partition of $\{1,\ldots,k\}$, $k\geq 1$.
Let $a=(a_1,\ldots,a_k) \in \mathbb{R}^k$ and define $b=(b_1,\ldots,b_n)\in \mathbb{R}^n$ by setting $b_i = \sum_{j\in I_i}a_j$, $i\in \{1,\ldots,n\}$.
If $a \in \Delta_k$ then $b \in \Delta_n$.
\end{lemma}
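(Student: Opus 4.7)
My plan is to unfold the definition of $\Delta_k$ into a single scalar condition that is easy to preserve under the grouping operation.

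First I would set $S = \sum_{j=1}^k a_j$ and observe that the defining inequalities of $\Delta_k$ are equivalent to: $a_j \in [0,1]$ for all $j$, $S \leq 1$, and
\[
a_i \geq 1 - S, \qquad \forall i \in \{1,\ldots,k\},
\]
where the last inequality comes from rewriting $a_i + \sum_j a_j \geq 1$ as $a_i \geq 1 - S$.

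Next I would verify the three analogous conditions for $b$. Since $b_i$ is a sum of nonnegative terms bounded above by $S \leq 1$, we have $b_i \in [0,1]$. Moreover, because $(I_1,\ldots,I_n)$ is a partition of $\{1,\ldots,k\}$,
\[
\sum_{i=1}^n b_i = \sum_{i=1}^n \sum_{j \in I_i} a_j = \sum_{j=1}^k a_j = S \leq 1,
\]
so the total-mass condition is inherited with the same constant $S$.

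Finally, for the lower bound $b_i \geq 1 - S$, I would simply pick any $j \in I_i$ (nonempty as a block of a partition) and chain
\[
b_i = \sum_{j' \in I_i} a_{j'} \geq a_j \geq 1 - S,
\]
the last inequality being the rewritten condition for $a \in \Delta_k$. I do not expect any serious obstacle: the only subtle point is noticing that the apparently strange inequality $a_i + \sum_j a_j \geq 1$ is really the statement $a_i \geq 1 - S$, which is then trivially transferred to $b_i$ because each $b_i$ dominates any single $a_j$ in its block.
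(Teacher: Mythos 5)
Your proof is correct and fills in a step that the paper explicitly leaves as an exercise (``The proof is obvious and left to the reader''). The reformulation $a_i \ge 1 - S$ with $S = \sum_j a_j$ is exactly the right way to see it: nonnegativity and $b_i \le S \le 1$ give $b_i \in [0,1]$, the partition structure preserves $S$, and since each block is nonempty you get $b_i \ge a_j \ge 1-S$ for any $j \in I_i$. No gaps.
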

\begin{proof}
The proof is obvious and left to the reader.
\end{proof}
Thanks to this lemma it is possible to iterate~\cref{main-result} and to obtain a general bound for $\mu(A_r)$ for all values of $r>0$.
This bound will depend on the way the sets $A_{1,r},\ldots, A_{k,r}$ coalesce as $r$ increases.
This is made precise in the following definition.

\begin{definition}[Coalescence graph of a family of sets]
Let $A_1,\ldots,A_k$ be subsets of $E$.
The \emph{coalescence graph} of this family of sets is the family of graphs $G_r=(V,E_r)$, $r>0$, where $V = \{1,2,\ldots,k\}$ and the set of edges $E_r$ is defined as follows: $\{i,j\} \in E_r$ if $d(A_{i,r},A_{j,r})=0$.
\end{definition}

\begin{corollary}\label{cor:iteration}
Let $A_1,\ldots,A_k$ be subsets of $E$ such that $\min_{i\neq j} d(A_i,A_j)>0$ and $(\mu(A_1),\ldots, \mu(A_k)) \in \Delta_k$.
For any $r>0$, let $N(r)$ be the number of connected components in the coalescence graph $G_r$ associated to $A_1,\ldots,A_k$.
The function $(0,\infty) \to \{1,\ldots,k\} :  r \mapsto N(r)$ is non-increasing and right-continuous.
Define $r_i = \sup\{ r >0 : N(r)\geq k-i+1\}$, $i=1,\ldots, k$ and $r_0=0$ then it holds
\begin{equation}\label{eq:iteration}
\mu(A_{r}) \geq 1 - (1-\mu(A)) \exp \left(-  c\sum_{i=1}^{k} \phi \left({[r \wedge r_{i} - r_{i-1}]}_{+} \sqrt{\lambda^{(k-i+1)}}\right)\right),\, \forall r>0,
\end{equation}
where $\phi(x) = \min(x ; x^2)$, $x\geq0$ and $c$ is the universal constant appearing in~\cref{main-result}.
\end{corollary}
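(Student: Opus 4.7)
The strategy is to iterate~\cref{main-result} across the successive stages $[r_{i-1}, r_i]$: on each stage the graph $G_{r_{i-1}}$ partitions $\{1,\ldots,k\}$ into $N(r_{i-1})$ connected components, and one applies the main theorem to the corresponding family of coalesced sets with the appropriate eigenvalue.

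As a warm-up, I would establish the stated properties of $N$. Monotonicity follows immediately from the fact that $r \mapsto A_{i,r}$ is non-decreasing, which forces $r \mapsto d(A_{i,r},A_{j,r})$ to be non-increasing and thus $E_r$ to only grow. For right-continuity, the point is that when $d(A_{i,r_0},A_{j,r_0}) = \delta > 0$, a triangle-inequality argument comparing $r$- and $r_0$-enlargements shows $d(A_{i,r},A_{j,r}) > 0$ persists in a small right neighborhood of $r_0$; since $V$ is finite, this yields right-continuity of $N$.

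The heart of the proof is the iteration. For each $i$, let $\{I^{(i)}_1, \ldots, I^{(i)}_{N_i}\}$ be the partition of $\{1,\ldots,k\}$ into connected components of $G_{r_{i-1}}$, with $N_i = N(r_{i-1})$. Define the coalesced sets $B^{(i)}_l = \bigcup_{j \in I^{(i)}_l} A_{j,r_{i-1}}$. These are pairwise separated by construction, and since the $A_{j,r}$ are pairwise disjoint for $r < r_{i-1}$, passing to the limit gives $\sum_l \mu(B^{(i)}_l) = \mu(A_{r_{i-1}}) \leq 1$; combined with~\cref{Delta} applied to the coarsening map this shows $(\mu(B^{(i)}_l))_l \in \Delta_{N_i}$. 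On active stages ($r_{i-1} < r_i$) one has $N_i = k-i+1$, and an application of~\cref{main-result} to the family $\{B^{(i)}_l\}$ with shift $r' = (r \wedge r_i) - r_{i-1}$ and eigenvalue $\lambda^{(k-i+1)}$ gives
\[
  1 - \mu(A_{r \wedge r_i}) \leq \bigl(1 - \mu(A_{r_{i-1}})\bigr) \exp\!\Bigl( -c\, \phi\bigl( [(r \wedge r_i) - r_{i-1}]_+ \sqrt{\lambda^{(k-i+1)}} \bigr) \Bigr),
\]
where I use the inclusion $(A_{r_{i-1}})_{r-r_{i-1}} \subseteq A_r$ (from the triangle inequality) to transfer the estimate from the iterated enlargement to $A_r$ itself. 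On inactive stages ($r_{i-1} = r_i$), $\phi(0) = 0$ makes the contribution trivial, so the formula harmlessly retains a $\lambda^{(k-i+1)}$ that never effectively enters. Chaining these bounds multiplicatively over $i = 1, \ldots, k$ then yields~\eqref{eq:iteration}.

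The principal technical hurdle is the bookkeeping at each stage: one must check that the parameter $(r \wedge r_i) - r_{i-1}$ obeys the ``half the minimum distance'' constraint of~\cref{main-result}, which is forced by the very definition of $r_i$ as the next coalescence time (with equality in length spaces and needing a touch more care in general metric settings), and one must propagate the $\Delta_{N_i}$ condition through the coarsening via~\cref{Delta}. Once these verifications are in place, the telescoping product of exponentials delivers the stated bound.
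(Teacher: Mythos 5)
Your proposal is correct and follows essentially the same approach as the paper: iterate \cref{main-result} across coalescence stages, coalescing the enlarged sets via \cref{Delta}, and transferring the bounds with the inclusion $(A_{r_{i-1}})_s\subseteq A_{r_{i-1}+s}$. The only presentational difference is that the paper packages the iteration as a formal induction on $k$ (applying the inductive hypothesis to the coalesced family formed at $r_{i_1}=\tfrac12\min_{i\neq j}d(A_i,A_j)$), which streamlines the bookkeeping you flag around the half-minimum-distance constraint --- though the paper too leaves the corresponding identity $r_{i_1+i}=r_{i_1}+s_i$ as ``easily seen.''
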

Observe that, contrary to usual concentration results, the bound given above depends on the geometry of the set $A$.\\


\subsection{Proofs}
\label{section:proofs}
First, we prove~\cref{cor:iteration}.
The main argument is to repeatedly apply~\cref{main-result} until two sets or more coalesce.
\begin{proof}[Proof of~\cref{cor:iteration}]
We proceed by induction over the number of components $k$.
For $k=1$,~\eqref{eq:iteration} follows immediately from~\cref{main-result}.
Let $k>1$ and let us assume that~\eqref{eq:iteration} is true for any collection of subsets $B_1,\ldots,B_l$ satisfying the assumptions of~\cref{cor:iteration} for all $l\in \{1,\ldots, k-1\}$.
Let $A_1,A_2,\ldots,A_k$ be a collection of sets satisfying the assumptions of~\cref{cor:iteration}.
According to~\cref{main-result}, it holds
\[
\mu(A_r) \geq 1 - (1-\mu(A)) \exp\left( - c \phi(r\sqrt{\lambda^{(k)}})   \right),
\]
for all $0<r\leq \frac{1}{2}\min_{i\neq j} d(A_i,A_j)$.

Let $k_{1} = N(\frac{1}{2} \min_{i\neq j} d(A_{i}, A_{j}))$ and let $i_{1} = k - k_{1}$.
Then, for all $i\in \{1,\ldots,i_{1}\}$, $r_{i} = \frac{1}{2} \min_{i \neq j} d(A_{i},A_{j})$.
So that, for all $0 < r \leq r_{i_{1}}$, the preceding bound can be rewritten as follows (note that only the term of index $i=1$ gives a non zero contribution)
\begin{equation}\begin{split}
  \label{eq:iteration-proof}
  \mu(A_r) &\geq 1 - (1-\mu(A)) \exp \left( - c \sum_{i=1}^{i_{1}} \phi \left( {[r\wedge r_{i} - r_{i-1}]}_{+} \sqrt{\lambda^{(k-i+1)}} \right)   \right)\\
           & = 1 - (1-\mu(A)) \exp \left( - c \sum_{i=1}^{k}\phi\left( {[r \wedge r_i - r_{i-1}]}_{+} \sqrt{\lambda^{(k-i+1)}} \right) \right)
\end{split}
\end{equation}
which shows that~\eqref{eq:iteration} is true for $0 < r \leq r_{{i}_1}$.
Now let $I_{1},\ldots, I_{k_{1}}$ be the connected components of $G_{r_{1}}$ and define, for all $i\in \{1,\ldots,k_{1}\}$, $B_{i} = \cup_{j \in I_{i}} A_{j,r_{1}}$.
It follows easily from~\cref{Delta} that $(\mu(B_1),\ldots,\mu(B_{k_1})) \in \Delta_{k_1}$.
Since $\min_{i\neq j} d(B_i,B_j)>0$, the induction hypothesis implies that 
\[
\mu(B_s) \geq 1 - (1-\mu(B)) \exp \left( -c \sum_{i=1}^{k_1} \phi \left( {[s\wedge s_i - s_{i-1}]}_{+} \sqrt{\lambda^{(k_{1}-i+1)}} \right) \right),\qquad \forall s>0,
\]
where $B = B_1\cup \cdots \cup B_{k_1} = A_{r_1}$ and $s_i=\sup\{ s >0 : N'(s)\geq k_1-i+1\}$, $i\in \{1,\ldots, k_1\}$ ($s_0=0$) with $N'(s)$ the number of connected components in the graph $G'_s$ associated to $B_1,\ldots,B_{k_1}$.
It is easily seen that $r_{i_1+i} = r_{i_1}+ s_i$, for all $i\in \{0,1\ldots, k_1\}$.
Therefore, we have that, for $r>r_{i_1}$,
\begin{align*}
\mu(A_r) & \geq \mu(B_{r-r_{i_1}})\\
 & \geq 1 - (1-\mu(A_{r_{i_{1}}})) \exp\left(- c \sum_{i = i_{1} +1}^{k} \phi \left({[r\wedge r_{i}-r_{i-1}]}_{+} \sqrt{\lambda^{(k-i+1)}} \right) \right)\\
 & \geq 1 - (1-\mu(A)) \exp \left(- c \sum_{i=1}^{k} \phi\left({[r\wedge r_{i}-r_{i-1}]}_{+} \sqrt{\lambda^{(k-i+1)}} \right) \right),
\end{align*}
where the last line is true by~\eqref{eq:iteration-proof}.
\end{proof}

To prove~\cref{main-result}, we need some preparatory lemmas.
Given a subset $A \subset E$, and $x \in E$, the minimal distance from $x$ to $A$ is denoted by
\[
d(x,A) = \inf_{y\in A} d(x,y).
\]

\begin{lemma}\label{lemma:inclusion}
Let $A \subset E$ and $\epsilon > 0$, then ${(E \setminus A_{\epsilon})}_{\epsilon} \subset E \setminus A$.
\end{lemma}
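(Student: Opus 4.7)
The plan is to argue directly from the definition of the open $\epsilon$-enlargement by contrapositive / contradiction. Recall that by the definition given in the introduction, $A_\epsilon = \{z \in E : \exists a \in A,\ d(z,a) < \epsilon\}$.

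First I would take an arbitrary $x \in {(E \setminus A_\epsilon)}_\epsilon$ and, using the definition of the $\epsilon$-enlargement, produce a witness $y \in E \setminus A_\epsilon$ with $d(x,y) < \epsilon$. The goal is then to show $x \notin A$.

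Next I would argue by contradiction: assume $x \in A$. Then $y$ is a point of $E$ whose distance to the element $x \in A$ is strictly less than $\epsilon$, which by the very definition of $A_\epsilon$ forces $y \in A_\epsilon$. This contradicts the choice of $y \in E \setminus A_\epsilon$, so we must have $x \in E \setminus A$, which is exactly the claimed inclusion.

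There is essentially no obstacle here; the statement is a one-line consequence of the symmetry of $d$ and the use of strict inequality in the definition of $A_\epsilon$. The only mild subtlety worth flagging is precisely that the enlargement is defined with a strict inequality (otherwise the analogue with closed enlargements would require a slightly different formulation), but as stated the argument goes through verbatim.
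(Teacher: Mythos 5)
Your proof is correct and follows essentially the same route as the paper's: both extract a witness $y \in E \setminus A_\epsilon$ with $d(x,y)<\epsilon$ and then apply the triangle inequality. The only cosmetic difference is that you phrase the final step as a contradiction from $x\in A$, while the paper argues directly that $d(x,A) \geq d(y,A) - d(x,y) > 0$ using the $1$-Lipschitz property of $z\mapsto d(z,A)$; the paper's phrasing has the small advantage of yielding the slightly stronger conclusion $x \notin \bar A$, which is recorded in the remark following the lemma.
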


\begin{proof}
Let $x \in {(E \setminus A_{\epsilon})}_{\epsilon}$.
Then, there exists $y \in E \setminus A_{\epsilon}$ (in particular $d(y, A) \geq \epsilon$) such that $d(x, y) < \epsilon$.
Since the function $z\mapsto d(z,A)$ is $1$-Lipschitz, one has
  \[
    d(x, A) \geq d(y, A) - d(x, y) > 0
  \]
and so $x \in E\setminus A$.
\end{proof}

\begin{remark}
  In fact, we proved that ${(E \setminus A_{\epsilon})}_{\epsilon} \subset E \setminus \bar{A}$.
    The converse is, in general, not true.
\end{remark}

\begin{lemma}\label{lemma:max_of_quotient}
Let $A_{1}, \dots, A_{k}$ be a family of sets such that $(\mu(A_1),\ldots,\mu(A_k))\in \Delta_k$ and $r:=\frac{1}{2}\min_{i\neq j}d(A_{i}, A_{j})>0$.
Let $0<\epsilon \leq r$ and set $A = \cup_{1\leq i \leq k} A_{i}$ and $A_{0} = E \setminus (A_{\epsilon})$.
Then,
\begin{equation}  \label{ineq:bound_max}
\max_{i=0, \dots, k} \frac{\mu(A_{i,\epsilon})}{\mu(A_{i})} \leq \frac{1 - \mu(A)}{1 -\mu(A_{\epsilon})}.
\end{equation}
\end{lemma}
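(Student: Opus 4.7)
The plan is to split the analysis into the case $i = 0$, which follows directly from \cref{lemma:inclusion}, and the case $i \in \{1,\dots,k\}$, which will reduce to an elementary inequality exploiting the pairwise disjointness of the enlargements together with the definition of $\Delta_k$.

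For $i = 0$, I apply \cref{lemma:inclusion} with $A$ replaced by $A_\epsilon$: it yields $A_{0,\epsilon} = {(E \setminus A_\epsilon)}_\epsilon \subset E \setminus A$, hence $\mu(A_{0,\epsilon}) \leq 1 - \mu(A)$. Since by definition $\mu(A_0) = 1 - \mu(A_\epsilon)$, dividing produces exactly the right bound.

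For $i \in \{1,\dots,k\}$, I first note that the enlargements $A_{1,\epsilon},\dots,A_{k,\epsilon}$ remain pairwise disjoint: if a point $x$ belonged to both $A_{i,\epsilon}$ and $A_{j,\epsilon}$, one could produce $a \in A_i$, $b \in A_j$ with $d(a,b) < 2\epsilon \leq 2r \leq d(A_i,A_j)$, a contradiction. Therefore $\mu(A_\epsilon) = \sum_{j=1}^k \mu(A_{j,\epsilon})$. Writing $a_j = \mu(A_j)$, $b_j = \mu(A_{j,\epsilon})$, $a = \mu(A)$, $b = \mu(A_\epsilon)$, the target inequality becomes $b_i(1-b) \leq a_i(1-a)$, which after expansion is equivalent to
\[
  (b_i - a_i)(1 - b) \leq a_i (b - a).
\]
I would then conclude this by combining two elementary estimates. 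The first is $b_i - a_i \leq b - a$, which holds term by term because $b_j \geq a_j \geq 0$ for every $j$. The second is $1 - b \leq a_i$: since all $b_j \geq a_j$ we have $b \geq a$, hence $1 - b \leq 1 - a$, and the assumption $(a_1,\dots,a_k) \in \Delta_k$ gives exactly $a_i \geq 1 - \sum_j a_j = 1 - a$. Multiplying the two non-negative inequalities $b_i - a_i \leq b - a$ and $1 - b \leq a_i$ yields the claim.

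I expect no significant obstacle; the only subtle point is ensuring disjointness of the enlargements at the endpoint $\epsilon = r$, which must be treated with the strict inequality built into the definition of an open enlargement. The rest is a short algebraic rearrangement plus one use of the defining constraints of $\Delta_k$.
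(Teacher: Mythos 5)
Your proof is correct and matches the paper's at the structural level: the case $i=0$ is dispatched via \cref{lemma:inclusion}, the hypothesis $\epsilon \le r$ gives pairwise disjointness of the enlarged sets, and the remaining case reduces to an elementary scalar inequality whose validity rests on the $\Delta_k$ constraint $\mu(A_i) \ge 1-\mu(A)$. The only place you diverge is the final algebraic step. The paper bounds $1-\mu(A_\epsilon)$ above by $1-\mu(A_{i,\epsilon})-m_i$ (with $m_i=\sum_{j\ne i}\mu(A_j)$) and then invokes monotonicity of the downward parabola $u\mapsto(1-u-m_i)u$ on $[(1-m_i)/2,1]$; you instead rearrange the target directly to $(b_i-a_i)(1-b)\le a_i(b-a)$ and conclude by multiplying the two non-negative inequalities $b_i-a_i\le b-a$ and $1-b\le a_i$. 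The two arguments are logically equivalent and of comparable length, but your version sidesteps the monotonicity-of-a-quadratic detour and makes the precise role of the $\Delta_k$ constraint ($a_i \ge 1-a$) a bit more transparent; the paper's version makes the ``this is a one-variable problem'' reduction explicit. Either is a clean way to finish.
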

  
\begin{proof}
  First, this is true for $i=0$.
  Indeed, by definition $A_0 = E \setminus (A_{\epsilon})$ and, according to~\cref{lemma:inclusion}, ${(A_0)}_\epsilon \subset A^c$ (the equality is not always true), which proves~\eqref{ineq:bound_max} in this case.
  Now, let us show~\eqref{ineq:bound_max} for the other values of $i$.
  Since $\epsilon \leq r$, the $A_{j,\epsilon}$'s are disjoint sets.
  Thence,~\cref{ineq:bound_max} is equivalent to 
  \[
    \left(1-\sum_{j=1}^{k} \mu(A_{j,\epsilon})\right) \mu(A_{i,\epsilon}) \leq \left(1-\sum_{j=1}^{k} \mu(A_{j})\right) \mu(A_{i}).
  \]
  This inequality is true as soon as 
  \[
    \left(1- \mu(A_{i,\epsilon}) - m_i\right) \mu(A_{i,\epsilon}) \leq \left(1-\mu(A_i)-m_i\right) \mu(A_i),
  \]
  denoting $m_i= \sum_{j\neq i}^k \mu(A_j)$.
  The function $f_i(u) = (1-u-m_i)u$, $u\in [0,1]$, is decreasing on the interval $[(1-m_i)/2,1]$.
  We conclude from this that~\eqref{ineq:bound_max} is true for all $i \in \{1,\ldots,k\}$, as soon as $\mu(A_i) \geq (1-m_i)/2$ for all $i \in \{1,\ldots, k\}$ which amounts to $(\mu(A_1),\ldots,\mu(A_k))\in \Delta_k$.
\end{proof}

For $p>1$, we define the function $\chi_{p} \colon [0,\infty[ \to [0,1]$ by
\[
\chi_p(x) = {(1-x^{p})}^{p},\quad \text{for}\quad x \in [0,1] \quad\text{and}\quad \chi_p(x)=0 \quad\text{for}\quad x>1.
\]
It is easily seen that $\chi_p(0)=1$, $\chi_p'(0)=\chi_p(1)=\chi_p'(1)=0$, that $\chi_{p}$ takes values in $[0,1]$ and that $\chi_{p}$ is continuously differentiable on $[0,\infty[$.
We use the function $\chi_{p}$ to construct smooth approximations of indicator functions on $E$, as explained in the next statement.
\begin{lemma}\label{lemma:smooth_indicator}
Let $A \subset E$ and consider the function $f(x) = \chi_p(d(x,A)/\epsilon)$, $x \in E$, where $\epsilon>0$ and $p>1$.
For all $x \in E$, it holds
\[
|\nabla f|(x) \leq p^2\epsilon^{-1} \mathbf{1}_{A_\epsilon\setminus A}
\]
\end{lemma}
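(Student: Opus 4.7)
The plan is to write $f = \chi_p \circ g$ where $g(x) = d(x,A)/\epsilon$, and to exploit a chain-rule type bound on the local Lipschitz constant, which reduces the claim to a direct computation of $\chi_p'$.

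First, since $y \mapsto d(y,A)$ is $1$-Lipschitz on $(E,d)$, the composition $g$ is $(1/\epsilon)$-Lipschitz, so $|\nabla g|(x) \leq 1/\epsilon$ at every $x \in E$. Next, differentiate $\chi_p$ on $[0,1]$ to obtain $\chi_p'(t) = -p^2 t^{p-1}(1-t^p)^{p-1}$, while $\chi_p' \equiv 0$ on $[1,\infty)$. Since $t^{p-1}$ and $(1-t^p)^{p-1}$ both lie in $[0,1]$ when $t \in [0,1]$, one has $|\chi_p'(t)| \leq p^2$ on $[0,\infty)$; crucially, $\chi_p'(t) = 0$ at $t = 0$ and for every $t \geq 1$.

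The heart of the argument is the chain rule for local Lipschitz constants: if $h \in C^1([0,\infty))$ and $g \colon E \to [0,\infty)$, then $|\nabla(h \circ g)|(x) \leq |h'(g(x))| \cdot |\nabla g|(x)$. This follows from the mean-value bound $|h(g(y)) - h(g(x))| \leq \sup_{s} |h'(s)| \cdot |g(y) - g(x)|$ (supremum over $s$ between $g(x)$ and $g(y)$), divided by $d(x,y)$ and passed to the limit $y \to x$, using the continuity of $h'$ at $g(x)$. Applied to $h = \chi_p$, this yields $|\nabla f|(x) \leq \epsilon^{-1} |\chi_p'(g(x))|$. Now distinguish three cases: if $x \in A$ then $g(x) = 0$, so the bound equals $0$; if $x \notin A_\epsilon$ then $g(x) \geq 1$, so the bound is again $0$; and on $A_\epsilon \setminus A$ the factor $|\chi_p'(g(x))|$ is at most $p^2$, giving the bound $p^2/\epsilon$. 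Stitching the three cases together produces exactly $|\nabla f|(x) \leq p^2 \epsilon^{-1} \mathbf{1}_{A_\epsilon \setminus A}(x)$.

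No real obstacle arises. The computation of $\chi_p'$ and the Lipschitz property of the distance function are elementary; the only point that deserves mention is the chain rule for the local Lipschitz constant with a $C^1$ outer function, which is a routine consequence of the mean value theorem together with the continuity of $\chi_p'$.
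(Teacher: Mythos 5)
Your proof is correct and follows essentially the same route as the paper's: compose $\chi_p$ with the $(1/\epsilon)$-Lipschitz function $d(\cdot,A)/\epsilon$, apply a chain rule for the local Lipschitz constant (which the paper simply cites, while you sketch via the mean value theorem), and then use $|\chi_p'|\le p^2$ together with the vanishing of $\chi_p'$ at $0$ and on $[1,\infty)$ to localize the bound to $A_\epsilon\setminus A$. The only difference is cosmetic — you fill in the derivative computation and the chain-rule justification in more detail than the paper does.
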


\begin{proof}
  Thanks to the chain rule for the local Lipschitz constant (see e.g.~\cite[Proposition 2.1]{AmbrosioGhezziSobolevBVMMS}),
\[ 
\left|\nabla \chi_p\left(\frac{d(\cdot, A)}{\epsilon}\right)\right|(x) \leq \epsilon^{-1} \chi_p'\left(\frac{d(\cdot, A)}{\epsilon}\right) |\nabla d(\cdot, A)|(x).
\]
The function $d(\cdot,A)$ being Lipschitz, its local Lipschitz constant is $\leq 1$ and, thereby,
\[
|\nabla f|(x) \leq \chi_p'\left(\frac{d(x,A)}{\epsilon}\right).
\]
In particular, thanks to the aforementioned properties of $\chi$, $|\nabla f|$ vanishes on $A$ (and even on $\overline{A}$) and on $\{x \in E : d(x,A)\geq \epsilon\} = E \setminus A_\epsilon$.
On the other hand, a simple calculation shows that $|\chi_p'| \leq p^2$ which proves the claim.
\end{proof}

\begin{proof}[Proof of~\cref{main-result}]
Take Borel sets $A_{1}, \dots, A_{k}$ with $\frac{1}{2}\min_{i\neq j}d(A_{i}, A_{j})\geq r>0$ and $(\mu(A_1),\ldots,\mu(A_k))\in \Delta_k$ and consider $A=A_1\cup \cdots \cup A_k$.
Let us show that, for any $0<\epsilon\leq r$, it holds
\begin{equation}\label{eq:mr1}
\left(1 + \lambda^{(k)} \epsilon^{2}\right) (1 - \mu(A_{\epsilon})) \leq (1 - \mu(A)).
\end{equation}
Let $A_0 = E\setminus (A_\epsilon)$ and set $f_i(x)= \chi_p(d(x,A_i)/\epsilon)$, $x\in E$, $i\in \{0,\ldots,k\}$, where $p>1$.
According to~\cref{lemma:smooth_indicator} and the fact that $f_i = 1$ on $A_{i}$, we obtain
\begin{equation}\label{eq:maj}
\int |\nabla f_{i}|^{2} \,\dd \mu = \frac{p^4}{\epsilon^{2}} \mu(A_{i,\epsilon} \setminus A_{i})\quad\text{and}\quad \int f_{i}^{2} \,\dd \mu \geq \mu(A_{i}).
\end{equation}
Since the $f_{i}$'s have disjoint supports they are orthogonal in $L^2(\mu)$ and, in particular, they span a $k+1$ dimensional subspace of $H^1(\mu)$.
Thus, by definition of $\lambda^{(k)}$,
\[
\lambda^{(k)} \leq \sup_{a \in \dR^{k+1}}
\frac{ \int {|\nabla \left(\sum_{i=0}^{k} a_{i} f_{i} \right)|}^{2} \,\dd \mu}{ \int {\left(\sum_{i=0}^{k} a_{i} f_{i} \right)}^{2} \,\dd \mu}
\leq \sup_{a \in \dR^{k+1}}
\frac{ \int {\left(\sum_{i=0}^{k}|a_i||\nabla f_i|\right)}^{2}\, \dd \mu}{ \int {\left( \sum_{i=0}^{k} a_{i} f_{i} \right)}^2 \,\dd \mu},
\]
where the second inequality comes from the following easy to check sub-linearity property of the local Lipschitz constant:
\[
|\nabla \left(a f + bg\right)| \leq |a||\nabla f| + |b||\nabla g|.
\]
Since the $f_i's$ and the $|\nabla f_i|'s$ are two orthogonal families, we conclude using~\eqref{eq:maj}, that 
\[
\frac{\lambda^{(k)}\epsilon^{2}}{p^4} \leq \sup_{a\in \mathbb{R}^{k+1}} \frac{\sum_{i=0}^{k} a_{i}^{2} \left(\mu(A_{i,\epsilon}) - \mu(A_{i}) \right)}{\sum_{i=0}^{k} a_{i}^{2} \mu(A_{i})},
\]
which amounts to
\begin{equation}\label{equation:max}
1 + \frac{\lambda^{(k)} \epsilon^{2}}{p^4} \leq \max_{i=0, \dots, k} \frac{\mu(A_{i,\epsilon})}{\mu(A_{i})}.
\end{equation}
Applying~\cref{lemma:max_of_quotient} and sending $p$ to $1$ gives~\eqref{eq:mr1}.
Now, if $n \in \dN$ and $0<\epsilon$ are such that $n\epsilon \leq r$, then iterating~\eqref{eq:mr1} immediately gives
\[
{\left(1 + \lambda^{(k)} \epsilon^{2}\right)}^{n} (1 - \mu(A_{n \epsilon})) \leq 1-\mu(A).
\]
Optimizing this bound over $n$ for a fixed $\varepsilon$ gives
\[
(1-\mu(A_r)) \leq (1-\mu(A)) \exp\left( - \sup\left\{ \lfloor r/\epsilon \rfloor  \log \left(1+ \lambda^{(k)}\epsilon^2\right) : \epsilon\leq r\right\}\right).
\]
Thus, letting 
\begin{equation}\label{eq:Psi}
\Psi(x) = \sup\left\{ \lfloor t \rfloor  \log \left(1+\frac{x}{t^2}\right) : t\geq 1\right\},\qquad x\geq0,
\end{equation}
it holds
\[
(1-\mu(A_r)) \leq (1-\mu(A))\exp\left(-\Psi\left(\lambda^{(k)}r^2\right)\right).
\]
Using~\cref{technique} below, we deduce that $\Psi\left(\lambda^{(k)}r^2\right) \geq c \min (r^2\lambda^{(k)} ; r\sqrt{\lambda^{(k)}})$, with $c = \log(5)/4$, which completes the proof.
\end{proof}
\begin{lemma}\label{technique}
The function $\Psi$ defined by~\eqref{eq:Psi} satisfies
\[
\Psi(x) \geq \frac{\log(5)}{4} \min(x ; \sqrt{x}),\qquad \forall x \geq0.
\]
\end{lemma}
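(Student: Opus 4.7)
The plan is to lower-bound $\Psi(x)$ by evaluating its defining supremum at two specific trial values of $t$, chosen so that the argument of the logarithm becomes a fixed constant, namely $5$. Concretely, I would use $t = 1$ in the small-$x$ regime and $t = \sqrt{x}/2$ in the large-$x$ regime, with the threshold between them being $x = 4$ (the value at which these two choices coincide). The constant $\log 5/4$ in the statement is then no accident: optimizing $\log(1+1/\alpha^{2})/(2\alpha)$ over admissible scalings $t = \alpha\sqrt{x}$ places the extremum essentially at $\alpha = 1/2$.

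In the large-$x$ regime $x \geq 4$, I take $t = \sqrt{x}/2 \geq 1$. Then $x/t^{2} = 4$, so $\log(1 + x/t^{2}) = \log 5$, and the elementary bound $\lfloor t \rfloor \geq t/2$ (valid for all $t \geq 1$, verified separately on $[1,2)$ and on $[2,\infty)$) gives
\[
\Psi(x) \geq \frac{\sqrt{x}}{4}\log 5 = \frac{\log 5}{4}\min(x,\sqrt{x}).
\]

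In the small-$x$ regime $x \in [0,4]$, I take $t = 1$, which yields $\Psi(x) \geq \log(1+x)$. On $[0,1]$ one has $\min(x,\sqrt{x}) = x$, and the chord bound $\log(1+x) \geq x\log 2$ from the concavity of $\log(1+\cdot)$, combined with $4\log 2 = \log 16 > \log 5$, yields the target. On $[1,4]$ one has $\min(x,\sqrt{x}) = \sqrt{x}$, and the task reduces to the elementary inequality $g(x) := \log(1+x) - (\log 5/4)\sqrt{x} \geq 0$. I plan to verify this by showing $g$ is increasing on $[1,4]$: the sign of $g'$ amounts to comparing $8\sqrt{x}/(1+x)$ with $\log 5$, and since $x \mapsto 8\sqrt{x}/(1+x)$ is decreasing on $[1,\infty)$ with value $16/5$ at $x=4$ (well above $\log 5 < 2$), this comparison holds throughout $[1,4]$; hence $g(x) \geq g(1) = \log 2 - \log 5/4 > 0$.

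The only real obstacle is the middle regime $x \in [1,4]$: the chord bound used on $[0,1]$ is too weak there (it only yields a linear lower bound, whereas one needs $\sqrt{x}$), and the scaling $t = \sqrt{x}/2$ used for $x \geq 4$ is unavailable because $\sqrt{x}/2 < 1$ lies outside the range of the supremum defining $\Psi$. The short monotonicity argument for $g$ is what bridges these two regimes and delivers the precise constant $\log 5/4$.
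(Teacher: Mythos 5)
Your proof is correct and uses the same two trial points as the paper ($t=1$ for $x\leq 4$, $t=\sqrt{x}/2$ for $x\geq 4$) together with the same floor estimate $\lfloor t\rfloor \geq t/2$; the only divergence is how the regime $x\in[1,4]$ is handled. There you run a monotonicity analysis on $g(x)=\log(1+x)-\tfrac{\log 5}{4}\sqrt{x}$, whereas the paper simply draws the concavity chord of $\log(1+\cdot)$ from $(0,0)$ to $(4,\log 5)$, giving $\log(1+x)\geq \tfrac{\log 5}{4}\,x$ on all of $[0,4]$, and then notes that on $[1,4]$ one has $x\geq\sqrt{x}$, so the linear bound already dominates $\tfrac{\log 5}{4}\sqrt{x}$. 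Your stated obstacle --- that on $[1,4]$ ``a linear lower bound is too weak, whereas one needs $\sqrt{x}$'' --- has the inequality backwards: on $[1,\infty)$ a linear bound in $x$ is \emph{stronger} than a $\sqrt{x}$ bound, not weaker. What actually fails beyond $[0,1]$ is only your particular chord of slope $\log 2$; swapping it for the shallower chord of slope $\tfrac{\log 5}{4}$ (valid on all of $[0,4]$) eliminates the need for the derivative computation and reproduces the paper's one-line argument.
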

\begin{proof}
Taking $t=1$, one concludes that $\Psi(x) \geq \log(1+x)$, for all $x\geq0$.
The function $x\mapsto \log(1+x)$ being concave, the function $x\mapsto \frac{\log(1+x)}{x}$ is non-increasing.
Therefore, $\log(1+x) \geq \frac{\log(5)}{4} x$ for all $x\in [0,4]$.
Now, let us consider the case where $x\geq 4$.
Observe that $\lfloor t \rfloor \geq t/2$ for all $t\geq 1$ and so, for $x\geq 4$,
\[
\Psi(x) \geq \frac{1}{2}\sup_{t\geq 1}\left\{ t \log\left(1+\frac{x}{t^2}\right)\right\} \geq \frac{\log (5)}{4} \sqrt{x},  
\]
by choosing $t = \sqrt{x}/2 \geq 1$.
Thereby, 
\[
\Psi(x) \geq \frac{\log(5)}{4} \left[x \mathbf{1}_{[0,4]}(x) + \sqrt{x} \mathbf{1}_{[4,\infty)}(x)\right] \geq \frac{\log(5)}{4} \min(x ; \sqrt{x}),
\]
which completes the proof.
\end{proof}

\begin{remark}\label{rem:c}
The conclusion of Lemma~\cref{technique} can be improved.
Namely, it can be shown that 
\[
\Psi(x) = \max\left( 
\left(1+\lfloor \frac{\sqrt{x}}{a} \rfloor \right)
\log
\left(
1 + \frac{x}{{\left(1+\lfloor\frac{\sqrt{x}}{a} \rfloor \right)}^{2}}
\right) \ ;\ \left(\lfloor \frac{\sqrt{x}}{a} \rfloor \right)
\log
\left(
1 + \frac{x}{{\left(\lfloor\frac{\sqrt{x}}{a} \rfloor \right)}^{2}}
\right)
\right),
\]
(the second term in the maximum being treated as $0$ when $\sqrt{x}<a$)
where $0<a<2$ is the unique point where the function $(0,\infty) \to \mathbb{R} : u \mapsto \log(1+u^2)/u$ achieves its supremum.
Therefore, 
\[
\Psi(x) \sim \frac{\log(1+a^2)}{a} \sqrt{x}
\] 
when $x \to \infty$.
The reader can easily check that $\frac{\log(1+a^2)}{a}\simeq 0.8$.
In particular, it does not seem possible to reach the constant $c=1$ in~\cref{main-result} using this method of proof.
\end{remark}

\subsection{Two more multi-set concentration bounds}
The condition $(\mu(A_{1}), \dots, \mu(A_{k})) \in \Delta_{k}$ can be seen as the multi-set generalization of the condition, standard in concentration of measure, that the size of the enlarged set has to be bigger than $1/2$.
Indeed, the reader can easily verify that $(\frac{1}{k+1},\dots,\frac{1}{k+1}) \in \Delta_{k}$.
However, in practice, this condition can be difficult to check.
We provide two more multi-set concentration inequalities that hold in full generality.
The method of proof is the same as for~\cref{main-result} and is based on~\cref{equation:max}.

\begin{proposition}\label{proposition:alternative-result}
  Let $(E,d,\mu)$ be a metric measured space and $\lambda^{(k)}$ be defined as in~\eqref{metric_eigenvalues}.
  Let $(A_{1}, \dots, A_{k})$ be $k$ Borel sets, $A = \cup_{i} A_{i}$ and $A_{0} = E \setminus A_{r}$.
  Then, with $a_{(1)} = \min_{1 \leq i \leq k} \mu(A_{i})$, the following two bounds hold:
  \begin{align*}
  1 - \mu(A_{r}) & \leq (1-\mu(A)) \frac{1}{\prod_{i=1}^{k} \mu(A_{i})} \exp{\left(-c \min\left(r^{2} \lambda^{(k)}, r \sqrt{\lambda^{(k)}}\right)\right)};\\
    1 - \mu(A_{r}) & \leq (1-\mu(A)) \frac{1}{{\mu(A)}^{\mu(A)/a_{(1)}}} \exp{\left(-c \min\left(r^{2} \lambda^{(k)}, r \sqrt{\lambda^{(k)}}\right)\right)}.
  \end{align*}
\end{proposition}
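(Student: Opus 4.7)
The plan is to revisit the proof of~\cref{main-result} and exploit the fact that its core intermediate inequality~\eqref{equation:max}, namely
\[
1 + \frac{\lambda^{(k)}\epsilon^2}{p^4} \leq \max_{i=0,\ldots,k} \frac{\mu(A_{i,\epsilon})}{\mu(A_i)}
\]
(with $A_0 = E\setminus A_\epsilon$ and $0 < \epsilon \leq \tfrac{1}{2}\min_{i\neq j} d(A_i,A_j)$), was established without ever invoking the $\Delta_k$ condition. I only need to replace~\cref{lemma:max_of_quotient} by estimates of the maximum that are valid without any constraint on the $\mu(A_i)$'s. Since every ratio $\mu(A_{i,\epsilon})/\mu(A_i)$ is at least $1$, the maximum is dominated by the full product $\prod_{i=0}^{k}\mu(A_{i,\epsilon})/\mu(A_i)$. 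Iterating this inequality in $n$ steps of size $\epsilon$ with $n\epsilon \leq r$ (so that the disjointness of the constructed functions remains valid at each stage), telescoping the product index by index, and handling the $i=0$ factor through~\cref{lemma:inclusion}, yields the master inequality
\[
\left(1+ \frac{\lambda^{(k)}\epsilon^2}{p^4}\right)^n \leq \frac{1-\mu(A)}{1-\mu(A_{n\epsilon})}\prod_{i=1}^{k}\frac{\mu(A_{i,n\epsilon})}{\mu(A_i)}.
\]

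The two claimed bounds correspond to two different ways of estimating the remaining product. For the first, I would just use the trivial bound $\mu(A_{i,n\epsilon}) \leq 1$, which turns the product into $1/\prod_{i=1}^{k}\mu(A_i)$. For the second, I would apply weighted arithmetic--geometric mean with the weights $w_i = \mu(A_i)/\mu(A)$, which sum to $1$:
\[
\prod_{i=1}^{k}\left(\frac{\mu(A_{i,n\epsilon})}{\mu(A_i)}\right)^{\mu(A_i)/\mu(A)} \leq \sum_{i=1}^{k}\frac{\mu(A_i)}{\mu(A)}\cdot\frac{\mu(A_{i,n\epsilon})}{\mu(A_i)} = \frac{\mu(A_{n\epsilon})}{\mu(A)},
\]
and then raise both sides to the power $\mu(A)/a_{(1)} \geq 1$. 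Since each factor $\mu(A_{i,n\epsilon})/\mu(A_i) \geq 1$ and each exponent $\mu(A_i)/a_{(1)} \geq 1$, the resulting left-hand side is at least the plain product $\prod_{i=1}^{k}\mu(A_{i,n\epsilon})/\mu(A_i)$, while the right-hand side becomes $(\mu(A_{n\epsilon})/\mu(A))^{\mu(A)/a_{(1)}} \leq 1/\mu(A)^{\mu(A)/a_{(1)}}$ upon using $\mu(A_{n\epsilon}) \leq 1$.

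In either case, I would finish exactly as in the proof of~\cref{main-result}: isolate $1-\mu(A_{n\epsilon})$, let $p \to 1$, optimize over $n \in \dN$ for a given $r$ through the function $\Psi$ of~\eqref{eq:Psi}, and invoke~\cref{technique} to replace $\Psi(\lambda^{(k)}r^2)$ by $c \min(r^2\lambda^{(k)}, r\sqrt{\lambda^{(k)}})$. The only step requiring genuine insight is guessing the correct weights for the second bound---namely $w_i = \mu(A_i)/\mu(A)$ followed by the exponent rescale by $\mu(A)/a_{(1)}$---which converts a weighted geometric-mean bound back into a usable control on the unweighted product. Once this is spotted, everything else reduces to the machinery already developed for~\cref{main-result}.
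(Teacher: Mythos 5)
Your proof is correct, and it arrives at both bounds by a route that is genuinely different in its bookkeeping from the one in the paper, even though both start from the core estimate~\eqref{equation:max}. The paper's proof introduces the quantities $\alpha_i(n)$, $M_n$, $L_n$, and the counters $N_i$ (recording, over the $N$ iterations, which index $i$ realizes the max), and only afterwards telescopes the ratios to bound $\prod_i (1+\lambda^{(k)}\epsilon^2)^{N_i}$. You instead observe that, since every ratio $\mu(A_{i,\epsilon})/\mu(A_i)$ is $\geq 1$, one may replace $\max_i$ by $\prod_i$ in~\eqref{equation:max} directly and telescope the product; the index-$0$ factor is controlled by~\cref{lemma:inclusion} exactly as in~\cref{lemma:max_of_quotient}. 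This suppresses all the combinatorial bookkeeping with the $N_i$'s and gives the master inequality
\[
\left(1+\lambda^{(k)}\epsilon^2\right)^n (1-\mu(A_{n\epsilon})) \leq (1-\mu(A)) \prod_{i=1}^k \frac{\mu(A_{i,n\epsilon})}{\mu(A_i)},\qquad n\epsilon\leq r,
\]
after letting $p\to1$, which is weaker as an intermediate statement than the paper's~\eqref{equation:bound_N0}--\eqref{equation:bound_Ni} but is exactly what is needed. Your first bound then follows identically (each $\mu(A_{i,n\epsilon})\leq1$). For the second bound you also diverge from the paper: the paper inserts an auxiliary parameter $\theta$, invokes convexity of $\exp$ together with the disjointness estimate $\sum_i\mu(A_{i,N\epsilon})\leq 1$, and performs a Legendre-type optimization over $t=\theta\log(1+\lambda^{(k)}\epsilon^2)$ to reach $\mu(A)^{-\mu(A)/a_{(1)}}$; you instead apply the weighted AM--GM inequality with weights $w_i=\mu(A_i)/\mu(A)$ (which sum to $1$, the $A_i$ being disjoint), use $\sum_i\mu(A_{i,n\epsilon})=\mu(A_{n\epsilon})\leq 1$, and then rescale exponents, using $w_i\cdot\mu(A)/a_{(1)}=\mu(A_i)/a_{(1)}\geq 1$ together with each ratio being $\geq1$. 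These two derivations rest on the same underlying convexity but your version avoids the free parameter and its optimization, so it is somewhat more transparent; the final constants are identical in both cases, and the optimization over $(n,\epsilon)$ via $\Psi$ and~\cref{technique} is shared with the proof of~\cref{main-result} in either presentation.
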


\begin{proof}
  Fix $N \in \dN$ and $\epsilon > 0$ such that $N \epsilon \leq r$.
  For $i =1,\dots,k$ and $n \leq N$, we define
  \begin{align*}
    \alpha_{i}(n) &= \frac{\mu(A_{i,n\epsilon})}{\mu(A_{i,(n-1)\epsilon})};\\
    M_{n} &= \max_{1 \leq i \leq k} \alpha_{i}(n) \vee \frac{1 - \mu(A_{(n-1)\epsilon})}{1 - \mu(A_{n\epsilon})};\\
    L_{n} &= \{ i \in \{1,\dots,k\} | M_{n} = \alpha_{i}(n) \};\\
    N_{i} &= \sharp \{ n \in \{1,\dots,N\} | i = \inf L_{n} \};\\
    N_{0} &= N - \sum_{i=1}^{k} N_{i}.
\end{align*}
Roughly speaking, the number $N_{i}$ ($0 \leq i \leq k$) counts the number of time where the set $A_{i}$ growths in iterating~\cref{equation:max}.
\cref{lemma:max_of_quotient} asserts that in the case where $(\mu(A_{1}),\dots,\mu(A_{k})) \in \Delta_{k}$, then $N_{0} = N$.
However, we still obtain from~\cref{equation:max}, for $1 \leq i \leq k$,
\begin{equation}\label{equation:bound_Ni}
  \frac{1}{\mu(A_{i})} \geq \prod_{n=1}^{N} \alpha_{i}(n) \geq  {\left(1 + \lambda^{(k)} \epsilon^{2}\right)}^{N_{i}}.
\end{equation}
The first inequality is true because $\mu(A_{i,N\epsilon}) \leq 1$ and a telescoping argument.
The second inequality is true because, as $n$ ranges from $1$ to $N$, by definition of the number $N_{i}$ and~\cref{equation:max}, there are, at least $N_{i}$ terms appearing in the product that can be bounded by $(1+\lambda^{(k)}\epsilon^{2})$.
The other terms are bounded above by $1$.
The case of $i=0$ is handled in a similar fashion and we obtain:
\begin{equation}\label{equation:bound_N0}
  \begin{split}
    1 - \mu(A_{N\epsilon}) &\leq (1 - \mu(A)) {\left(1 + \lambda^{(k)} \epsilon^{2}\right)}^{-N_{0}}\\
                           &= (1 - \mu(A)) {\left(1 + \lambda^{(k)} \epsilon^{2}\right)}^{-N} \prod_{i=1}^{k} {\left(1 + \lambda^{(k)} \epsilon^{2}\right)}^{N_{i}}.
  \end{split}
\end{equation}
The announced bounds will be obtain by bounding the product appearing in the right-hand side and an argument similar to the end of the proof of~\cref{main-result}.
From~\cref{equation:bound_Ni}, we have that,
\begin{equation}\label{equation:bound_product1}
  \prod_{i=1}^{k} {\left(1+\lambda^{(k)}\epsilon^{2}\right)}^{N_{i}} \leq \frac{1}{\prod_{i=1}^{k} \mu(A_{i})}.
\end{equation}
Also, from~\cref{equation:bound_Ni},
\[
  \mu(A_{i,N\epsilon}) \geq {\left(1+\lambda^{(k)}\epsilon^{2}\right)}^{N_{i}}\mu(A_{i}).
\]
Because $N\epsilon \leq r$, the sets $A_{1,N\epsilon}, \dots, A_{k,N\epsilon}$ are pairwise disjoint and, thereby,
\[
  1 \geq \sum \mu(A_{i,N\epsilon}) \geq \sum_{i=1}^{k} {\left(1+\lambda^{(k)}\epsilon^{2}\right)}^{N_{i}} \mu(A_{i}).
\]
Fix $\theta > 0$ to be chosen later.
By convexity of $\exp$,
\begin{align*}
  1 + (1-\mu(A)) {\left(1+\lambda^{(k)}\epsilon^{2}\right)}^{\theta} & \geq \exp{\left(  \left(\sum_{i=1}^{k} \mu(A_{i}) N_{i} + (1-\mu(A))\theta\right) \log \left(1+\lambda^{(k)}\epsilon^{2}\right) \right)}\\
                                                        & \geq \exp{\left(  \left(a_{(1)} \sum_{i=1}^{k} N_{i} + (1-\mu(A))\theta\right) \log \left(1+\lambda^{(k)}\epsilon^{2}\right) \right)}.
\end{align*}
Finally, with $p = 1-\mu(A)$ and $t = \theta \log (1+\lambda^{(k)}\epsilon^{2})$, we obtain
\[
  \prod_{i=1}^{k} {\left(1+\lambda^{(k)}\epsilon^{2}\right)}^{N_{i}} \leq {\left(\e^{-pt} + p \e^{(1-p)t}\right)}^{1/a_{(1)}}.
\]
We easily check that, the quantity in the right-hand side is minimal for $t = \log \frac{1}{1-p}$ at which it takes the value ${(1-p)}^{p-1} = {\mu(A)}^{-\mu(A)/a_{(1)}}$.
Thus,
\begin{equation}\label{equation:bound_product2}
  \prod_{i=1}^{k} {(1+\lambda^{(k)}\epsilon^{2})}^{N_{i}} \leq \frac{1}{{\mu(A)}^{\mu(A)/a_{(1)}}}.
\end{equation}
Combining~\cref{equation:bound_product1,equation:bound_product2} with~\cref{equation:bound_N0} and the same argument as for~\cref{eq:Psi}, we obtain the two announced bounds.
\end{proof}
From~\cref{proposition:alternative-result}, we can derive bounds on the $\lambda^{(k)}$'s.
The proof is the same as the one of~\cref{proposition:bound-eigenvalue} and is omitted.

\begin{proposition}\label{proposition:alternative-bound-eigenvalue}
  Let $(E,d,\mu)$ be a metric measured space and $\lambda^{(k)}$ be defined as in~\eqref{metric_eigenvalues}.
  Let $A_{1}, \dots, A_{k}$ be measurable sets, then, with $r = \frac{1}{2} \min_{i \ne j} d(A_{i},A_{j})$ and $A_{0} = E \setminus {(\cup A_{i})}_{r}$,
  \begin{align*}
    \lambda^{(k)} & \leq \frac{1}{r^{2}} \psi\left(\frac{1}{c} \ln \frac{a_{(1)}}{\mu(A_{0})} + \frac{1}{c} k \ln \frac{1}{a_{(1)}} \right);\\
    \lambda^{(k)} & \leq \frac{1}{r^{2}} \psi\left(\frac{1}{c}  \ln \frac{a_{(1)}}{\mu(A_{0})} + \frac{1}{c} \frac{\mu(A)}{a_{(1)}} \ln \frac{1}{\mu(A)} \right),
  \end{align*}
  where $\psi(x) = \max(x,x^{2})$ and $a_{(1)} = \min_{1 \leq i \leq k} \mu(A_{i})$.
\end{proposition}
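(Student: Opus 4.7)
The statement is an inversion of the two concentration bounds of~\cref{proposition:alternative-result}, exactly parallel to the way~\cref{proposition:bound-eigenvalue} is obtained by inverting~\cref{main-result}. Accordingly, the plan is: fix $r = \tfrac{1}{2}\min_{i\ne j}d(A_{i},A_{j})$ and set $A_{0} = E\setminus A_{r}$, so that $\mu(A_{0}) = 1-\mu(A_{r})$; take logarithms in each of the two inequalities of~\cref{proposition:alternative-result}; and then solve for $\lambda^{(k)}$.

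Starting from the first inequality of~\cref{proposition:alternative-result}, taking logarithms and rearranging yields
\[
  c\,\min\!\bigl(r^{2}\lambda^{(k)},\,r\sqrt{\lambda^{(k)}}\bigr) \;\le\; \ln\!\frac{1-\mu(A)}{\mu(A_{0})} \;+\; \sum_{i=1}^{k}\ln\!\frac{1}{\mu(A_{i})}.
\]
Since $\mu(A_{i})\ge a_{(1)}$ for every $i$, the sum is bounded by $k\ln(1/a_{(1)})$, and, as in the proof of~\cref{proposition:bound-eigenvalue}, the ratio $(1-\mu(A))/\mu(A_{0})$ is compared with $a_{(1)}/\mu(A_{0})$. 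Finally, the elementary equivalence
\[
  \min(r^{2}\lambda,\,r\sqrt{\lambda}) \le Y \ \Longleftrightarrow\ \lambda \le \psi(Y)/r^{2},
\]
valid for $\lambda,Y\ge 0$ with $\psi(x)=\max(x,x^{2})$, produces the first announced bound after dividing by $c$. The second bound is obtained in exactly the same way from the second inequality of~\cref{proposition:alternative-result}: the factor $\mu(A)^{-\mu(A)/a_{(1)}}$ simply yields the additive term $\frac{\mu(A)}{a_{(1)}}\ln\frac{1}{\mu(A)}$ inside $\psi$.

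The only slightly delicate bookkeeping is the replacement of $\ln\frac{1-\mu(A)}{\mu(A_{0})}$ by $\ln\frac{a_{(1)}}{\mu(A_{0})}$. In~\cref{proposition:bound-eigenvalue} this step was automatic because the hypothesis $(\mu(A_{1}),\dots,\mu(A_{k}))\in\Delta_{k}$ forces $1-\mu(A)\le a_{(1)}$; no such assumption is made here, so one has to verify that in the regime where $1-\mu(A)>a_{(1)}$ the excess $\ln\frac{1-\mu(A)}{a_{(1)}}$ can be absorbed, up to the universal constant $c$, into the $k\ln(1/a_{(1)})$ term of the first bound, and into the $\frac{\mu(A)}{a_{(1)}}\ln\frac{1}{\mu(A)}$ term of the second. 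This is the only real obstacle; everything else is a direct transcription of the inversion performed in the proof of~\cref{proposition:bound-eigenvalue}, which is why the authors omit the details.
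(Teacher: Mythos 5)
You have the right skeleton: as the paper itself indicates, the proof is the inversion of the two bounds of \cref{proposition:alternative-result}, in exact parallel with the proof of \cref{proposition:bound-eigenvalue}, and your equivalence $\min(r^{2}\lambda,r\sqrt{\lambda})\le Y \Leftrightarrow \lambda\le\psi(Y)/r^{2}$ is the correct mechanism. You have also put your finger on the genuinely delicate point: the hypothesis $(\mu(A_{1}),\dots,\mu(A_{k}))\in\Delta_{k}$ is not assumed here, so the inequality $1-\mu(A)\le a_{(1)}$ that made \cref{proposition:bound-eigenvalue} work is no longer available.

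However, your assertion that the resulting excess ``can be absorbed, up to the universal constant $c$'' into the correction terms is not substantiated, and in fact it is false. Inverting the first bound of \cref{proposition:alternative-result} yields
\[
\lambda^{(k)} \le \frac{1}{r^{2}}\,\psi\left(\frac{1}{c}\ln\frac{1-\mu(A)}{\mu(A_{0})}+\frac{1}{c}\sum_{i=1}^{k}\ln\frac{1}{\mu(A_{i})}\right),
\]
and matching the argument of $\psi$ with the stated $\frac{1}{c}\ln\frac{a_{(1)}}{\mu(A_{0})}+\frac{k}{c}\ln\frac{1}{a_{(1)}}$ requires $(1-\mu(A))\,a_{(1)}^{k-1}\le\prod_{i}\mu(A_{i})$, i.e.\ $1-\mu(A)\le a_{(1)}$ when all $\mu(A_{i})$ coincide. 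This fails outside $\Delta_{k}$: take $k=2$, $\mu(A_{1})=\mu(A_{2})=1/10$, so that $(1-\mu(A))a_{(1)}=0.08>0.01=\prod\mu(A_{i})$; and for $k=1$ the stated bound collapses to requiring $\mu(A_{1})\ge 1/2$, which is exactly $\Delta_{1}$. Adjusting the constant $c$ cannot rescue this either, since the discrepancy is not a fixed multiplicative loss. The same obstruction affects the second bound. So the gap you flag is real and cannot be waved away: what the inversion actually establishes is the pair of bounds
\[
\lambda^{(k)} \le \frac{1}{r^{2}}\psi\left(\frac{1}{c}\ln\frac{1}{\mu(A_{0})}+\frac{k}{c}\ln\frac{1}{a_{(1)}}\right),\qquad
\lambda^{(k)} \le \frac{1}{r^{2}}\psi\left(\frac{1}{c}\ln\frac{1}{\mu(A_{0})}+\frac{1}{c}\frac{\mu(A)}{a_{(1)}}\ln\frac{1}{\mu(A)}\right),
\]
obtained by bounding $1-\mu(A)\le 1$; the $a_{(1)}$ in the numerator of the first logarithm in the statement appears to be an oversight carried over from \cref{proposition:bound-eigenvalue}. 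With that correction your argument is complete and does follow the paper's intended route; without it, the last step does not go through.
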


\subsection{Comparison with the result of Chung-Grigor'yan-Yau}\label{sec:comparison}
In~\cite{ChungGrigoryanYau1996}, the authors obtained the following result: 
\begin{theorem}[Chung-Grigoryan-Yau~\cite{ChungGrigoryanYau1996}]\label{thm:CGY}
Let $M$ be a compact connected smooth Riemannian manifold equipped with its geodesic distance $d$ and normalized Riemannian volume $\mu$.
For any $k\geq 1$ and any family of sets $A_0,\ldots,A_k$, it holds
\begin{equation}\label{eq:CGY}
\lambda^{(k)} \leq \frac{1}{\min_{i \neq j} d^2(A_{i},A_{j})} \max_{i \neq j} \log {(\frac{4}{\mu(A_i)\mu(A_j)})}^{2},
\end{equation}
where $1=\lambda^{(0)}\leq \lambda^{(1)}\leq\cdots \lambda^{(k)}\leq \cdots$ denotes the discrete spectrum of $-\Delta$.
\end{theorem}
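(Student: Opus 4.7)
The result is classical, so I outline the approach of~\cite{ChungGrigoryanYau1996} — a spectral / heat-semigroup argument which, as emphasized in the introduction, is genuinely different from the variational method of Section~\ref{section:concentration}. Let $(e_n)_{n\geq 0}$ be an $L^{2}(\mu)$-orthonormal basis of eigenfunctions of $-\Delta$ with $-\Delta e_n = \lambda^{(n)} e_n$, and let $P_t = \e^{t\Delta}$ denote the associated heat semigroup.

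First, a dimension-counting step. The orthogonal projection onto $\mathrm{span}(e_0, \dots, e_{k-1})$ has rank $k$, so the $k+1$ projected indicators $\mathbf{1}_{A_0}, \dots, \mathbf{1}_{A_k}$ form a linearly dependent family. Extracting a nontrivial dependence yields coefficients $(c_0, \dots, c_k) \neq 0$ such that $\phi := \sum_{i=0}^{k} c_i \mathbf{1}_{A_i}$ is orthogonal to $e_0, \dots, e_{k-1}$. Pairwise disjointness of the $A_i$'s gives $\|\phi\|_{L^{2}(\mu)}^{2} = \sum_i c_i^{2} \mu(A_i)$, and the spectral theorem produces
\[
  \|P_t \phi\|_{L^{2}(\mu)}^{2} \leq \e^{-2\lambda^{(k)} t} \|\phi\|_{L^{2}(\mu)}^{2}, \qquad t > 0.
\]

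Second, one estimates
\[
  \|P_t \phi\|_{L^{2}(\mu)}^{2} = \sum_{i,j} c_i c_j \langle P_{2t} \mathbf{1}_{A_i}, \mathbf{1}_{A_j} \rangle
\]
from below via two inputs: the classical Davies off-diagonal heat-kernel bound $|\langle P_{2t} \mathbf{1}_{A_i}, \mathbf{1}_{A_j}\rangle| \leq \sqrt{\mu(A_i)\mu(A_j)}\,\e^{-d(A_i,A_j)^{2}/(8t)}$ controls the cross terms, while the Cauchy--Schwarz bound $\langle P_{2t} \mathbf{1}_{A_i}, \mathbf{1}_{A_i}\rangle = \|P_t \mathbf{1}_{A_i}\|_{L^{2}(\mu)}^{2} \geq \mu(A_i)^{2}$ (using the mass-preservation identity $\int P_t \mathbf{1}_{A_i}\,\dd \mu = \mu(A_i)$) controls the diagonal ones. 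Combined with the spectral upper bound, this turns the orthogonality condition into a concrete quadratic inequality in $(c_i)$. One then reduces to the pair $(i^{\star}, j^{\star})$ achieving the maximum in the right-hand side of~\eqref{eq:CGY} and optimizes $t$: balancing the spectral decay $\e^{-2\lambda^{(k)} t}$ against the off-diagonal exponential $\e^{-d(A_{i^{\star}},A_{j^{\star}})^{2}/(8t)}$ near $t \sim d(A_{i^{\star}}, A_{j^{\star}})^{2}/\log(1/(\mu(A_{i^{\star}})\mu(A_{j^{\star}})))$ produces the $\log^{2}$ factor announced in the theorem, the constant $4$ inside the logarithm being an artefact of this bookkeeping.

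The main obstacle is that the orthogonality constraint in the first step generically forces all $k+1$ coefficients $c_i$ to be nonzero, so that one cannot directly restrict the analysis to a single pair of sets. Circumventing this is the combinatorial heart of the CGY argument: one reorganizes the $A_i$'s along a suitable nearest-neighbour chain (or matching) so that the worst pair in the final $\max_{i \neq j}$ dominates the contributions of every other pair. Everything else reduces to the analytical manipulations above.
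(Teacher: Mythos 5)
The paper does not actually prove~\cref{thm:CGY}; it is quoted from~\cite{ChungGrigoryanYau1996}, and in~\cref{sec:comparison} the authors only remark that the original proof ``is based on heat kernel bounds and is very different from ours.'' So there is no in-paper proof to compare against, only the CGY original.

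Your outline is in the right genre (heat semigroup, spectral decomposition, Davies off-diagonal bound), but the central mechanism you propose is not the CGY one, and as written it has a genuine gap. You build a single test function $\phi=\sum_i c_i\mathbf{1}_{A_i}$ orthogonal to the first $k$ eigenfunctions and then try to run the estimate on the quadratic form $\|P_t\phi\|^2$. The diagonal lower bound you invoke, $\langle P_{2t}\mathbf{1}_{A_i},\mathbf{1}_{A_i}\rangle\geq\mu(A_i)^2$, is lossy by a full factor of $\mu(A_i)$ relative to $\|\mathbf{1}_{A_i}\|^2_{L^2(\mu)}=\mu(A_i)$, so after inserting the spectral upper bound $\|P_t\phi\|^2\leq \e^{-2\lambda^{(k)}t}\sum_i c_i^2\mu(A_i)$ one is comparing $\sum_i c_i^2\mu(A_i)^2$ against $\e^{-2\lambda^{(k)}t}\sum_i c_i^2\mu(A_i)$, and when some $\mu(A_i)$ is small this does not yield a contradiction without extra structure on the $c_i$'s; nothing you wrote supplies that structure. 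The ``nearest-neighbour chain / matching'' device you allude to is also not what CGY do, and no concrete argument is given that it would close the gap. The actual CGY combinatorics is pair-by-pair, not via one test function: write, for each pair $i\neq j$, $I_t(A_i,A_j):=\langle P_t\mathbf{1}_{A_i},\mathbf{1}_{A_j}\rangle=\mu(A_i)\mu(A_j)+\sum_{n=1}^{k-1}\e^{-\lambda^{(n)}t}a_n^{(i)}a_n^{(j)}+R_{ij}(t)$ with $|R_{ij}(t)|\leq\e^{-\lambda^{(k)}t}\sqrt{\mu(A_i)\mu(A_j)}$, where $a_n^{(i)}=\langle\mathbf{1}_{A_i},e_n\rangle$. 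The reweighted low-mode vectors $w_i=(\e^{-\lambda^{(n)}t/2}a_n^{(i)})_{1\leq n\leq k-1}$ are $k+1$ vectors in $\dR^{k-1}$; since at most $m+1$ vectors in $\dR^m$ can be pairwise obtusely correlated, some pair $(i,j)$ satisfies $\langle w_i,w_j\rangle\geq 0$. For that pair $I_t(A_i,A_j)\geq\mu(A_i)\mu(A_j)-\e^{-\lambda^{(k)}t}\sqrt{\mu(A_i)\mu(A_j)}$, and combined with the Davies bound $I_t(A_i,A_j)\leq\sqrt{\mu(A_i)\mu(A_j)}\,\e^{-d(A_i,A_j)^2/(4t)}$ one gets $\sqrt{\mu(A_i)\mu(A_j)}\leq\e^{-\lambda^{(k)}t}+\e^{-d(A_i,A_j)^2/(4t)}$; optimizing over $t$ and bounding $d(A_i,A_j)\geq\min_{i'\neq j'}d(A_{i'},A_{j'})$ and the resulting pair by the $\max$ over pairs yields~\eqref{eq:CGY} with the constant $4$. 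So the missing idea in your proposal is precisely this obtuse-angle pigeonhole applied pairwise; the single-$\phi$ route you set up does not deliver it.
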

Let us translate this result in terms of concentration of measure.
Let $A_1,\ldots,A_k$ be sets such that $r=\frac{1}{2}\min_{1\leq i<j\leq k} d(A_i,A_j)>0$ and define $A =A_1\cup\cdots \cup A_k$ and $A_0 = M\setminus A_s$, for some $0<s\leq r$.
Then, applying~\eqref{eq:CGY} to this family of $k+1$ sets gives the following inequality
\begin{equation}\label{eq:CGYbis}
\min\left(a_{(2)} ; 1-\mu(A_s)\right) \leq \frac{4}{a_{(1)}} \exp(-\sqrt{\lambda^{(k)}}s),
\end{equation}
with $a_{(1)}$ and $a_{(2)}$ being respectively the smallest number and the second smallest number among $(\mu(A_1),\ldots,\mu(A_k))$ (counted with multiplicity).
Note that the right hand side is less than or equal to $a_{(2)}$ if and only if $s \geq s_o := \frac{1}{\sqrt{\lambda^{(k)}}} \log\left(\frac{4}{a_{(1)}a_{(2)}}\right)$, so that~\eqref{eq:CGYbis} is equivalent to the following statement:
\begin{equation}\label{eq:CGYter}
\mu(A_s) \geq 1-\frac{4}{a_{(1)}} \exp(-\sqrt{\lambda^{(k)}}s), \qquad \forall s \in [\min(s_o,r) ; r].
\end{equation}
We note that~\cref{eq:CGYter} holds for any family of sets, whereas the inequality given in~\cref{main-result} is only true when $(\mu(A_1),\ldots,\mu(A_k)) \in \Delta_k$.
Also due to the fact that the constant $c$ appearing in~\cref{main-result} is less than $1$,~\cref{eq:CGYter} is asymptotically better than ours (see also~\cref{rem:c} above).
On the other hand, one sees that~\cref{eq:CGYter} is only valid for $s$ large enough (and its domain of validity can thus be empty when $s_{o} > r$) whereas our inequality is true on the whole interval $(0,r]$.
It does not seem also possible to iterate~\cref{eq:CGYter} as we did in~\cref{cor:iteration}.
Finally, observe that the method of proof used in~\cite{ChungGrigoryanYau1996} and~\cite{ChungGrigoryanYau1997} is based on heat kernel bounds and is very different from ours.

Let us translate~\cref{thm:CGY} in a form closer to our~\cref{proposition:bound-eigenvalue}.
Fix $k$ sets $A_{1}, \dots, A_{k}$ such that $(\mu(A_{1}),\dots, \mu(A_{k})) \in \Delta_{k}$.
Let $2r = \min d(A_{i},A_{j})$, where the infimum runs on $i,j = 1,\dots,k$ with $i \ne j$.
We have to choose a $(k+1)$-th set.
In view of~\cref{thm:CGY}, the most optimal choice is to choose $A_{0} = E \setminus {(\cup A_{i})}_{r}$.
Indeed, it is the biggest set (in the sense of inclusion) such that $\min d(A_{i},A_{j}) = r$ where this time the infimum runs on $i,j = 0, \dots, k$ and $i \ne j$.
We let $a_{(0)} = \mu(A_{0})$ and we remark that if $(\mu(A_{1}), \dots, \mu(A_{k})) \in \Delta_{k}$ then $a_{(0)} \leq a_{(1)}$.
The bound~\cref{eq:CGY} can be read: for all $r > 0$,
\[
  \lambda^{(k)} \leq \frac{1}{r^{2}} {\left(\log \frac{4}{a_{(1)} a_{(0)}}\right)}^{2}.
\]
Therefore, to compare it to our bound, we need to solve
\[
  \phi^{-1}{\left(\frac{1}{c} \log \frac{a_{(1)}}{a_{(0)}}\right)}^{2} \leq {\left(\log \frac{4}{a_{(1)} a_{(0)}}\right)}^{2}.
\]
Because the right-hand side is always $\geq 1$, taking the square root and composing with the non-decreasing function $\phi$ yields
\[
  \frac{1}{c} \log \frac{a_{(1)}}{a_{(0)}} \leq \log \frac{4}{a_{(1)}a_{(0)}}.
\]
That is
\[
  a_{(1)}^{1+c} \leq 4^{c} a_{(0)}^{1-c}.
\]
In other words, on some range our bound is better and in some other range their bound is better.
However, if the constant $c = 1$ could be attained in~\cref{main-result}, this would show that our bound is always better.
Note that comparing the bounds obtained in~\cref{proposition:alternative-bound-eigenvalue} and the one of~\cite{ChungGrigoryanYau1996} is not so clear as, without the assumption that $(\mu(A_{1}),\dots,\mu(A_{k})) \in \Delta_{k}$ it is not necessary that $a_{(0)} \leq a_{(1)}$ and in that case we would have to compare different sets.

\section{Eigenvalue estimates for non-negatively curved spaces}\label{sec:examples}
We recall the values of the $\lambda^{(k)}$'s that appear in~\cref{main-result} in the case of two important models of positively curved spaces in geometry.
Namely:
\begin{enumerate}[(i)]
  \item The $n$-dimensional sphere of radius $\sqrt{\frac{n-1}{\rho}}$, $\dS^{n,\rho}$ endowed with the natural geodesic distance $d_{n,\rho}$ arising from its canonical Riemannian metric and its normalized volume measure $\mu_{n,\rho}$ which has constant Ricci curvature equals to $\rho$ and dimension $n$.
  \item The $n$-dimensional Euclidean space $\dR^{n}$ endowed with the $n$-dimensional Gaussian measure of covariance $\rho^{-1} \mathrm{Id}$,
    \[
    \gamma_{n,\rho}(\dd x) = \frac{\rho^{n/2} \e^{-\rho|x|^{2}/2}}{{(2 \pi)}^{n/2}} \dd x.
  \]
    This space has dimension $\infty$ and curvature bounded below by $\rho$ in the sense of~\cite{BakryEmeryDiffusionsHypercontractives}.
\end{enumerate}
These models arise as weighted Riemannian manifolds without boundary having a purely discrete spectrum.
In that case, it was proved in~\cite[Proposition 3.2]{Milman2015} that the $\lambda_{k}$'s of~\cref{metric_eigenvalues} are exactly the eigenvalues (counted with multiplicity) of a self-adjoint operator that we give explicitly in the following.
Using a comparison between eigenvalues of~\cite{Milman2015}, we obtain an estimates for eigenvalues in the case of log-concave probability measure over the Euclidean $\dR^{n}$.

\begin{example}[Spheres]
  On $\dS^{n,\rho}$, the eigenvalues of minus the Laplace-Beltrami operator (see for instance~\cite[Chapter 3]{AtkinsonHan}) are of the form $\rho^{-2} {(n-1)}^{2} l (l+n-1)$ for $l \in \dN$ and the dimension of the corresponding eigenspace $H_{l,n}$ is
  \[
    \dim H_{l,n} = \frac{2l + n -1}{l} \binom{l+n-2}{l-1},\ \text{if} \ l > 0 \ \and \ \dim H_{l,n} = 1, \ \text{if} \ l = 0.
  \]
  Consequently,
  \[
    D_{l,n} := \dim \bigoplus_{l'=0}^{l} H_{l',n} = \binom{n+l}{l} + \binom{n+l-1}{l-1},
  \]
  and $\lambda^{(k)} = \rho^{-2} {(n-1)}^{2} l (l+n-1)$ if and only if $D_{l-1,n} < k \leq D_{l,n}$ where $\lambda^{(k)}$ is the $k$-th eigenvalues of $- \Delta_{\dS^{n,\rho}}$ and coincides with the variational definition given in~\eqref{metric_eigenvalues}.
\end{example}

\begin{example}[Gaussian spaces]
  On the Euclidean space $\dR^{n}$, equipped with the Gaussian measure $\gamma_{n,\rho}$, the corresponding weighted Laplacian is $\Delta_{\gamma_{n,\rho}} = \Delta_{\dR^{n}} - \rho x \cdot \nabla$.
  The eigenvalues of $-\Delta_{\gamma_{n,\rho}}$ are exactly of the form $\rho^{2} q$ and the dimension of the associated eigenspace $H_{q,n}$ is
  \[
    \dim H_{q,n} = \binom{n+q-1}{q}.
  \]
  Consequently,
  \[
    D_{q,n} := \dim \bigoplus_{q'=0}^{q} H_{q',n} = \binom{n+q}{q},
  \]
  and $\lambda^{(k)} = \rho^{-2} q$ if and only if $D_{q-1,n} < k \leq D_{q,n}$ where $\lambda^{(k)}$ is the $k$-th eigenvalues of $- \Delta_{\gamma_{n,\rho}}$ and coincides with the variational definition given in~\eqref{metric_eigenvalues}.
\end{example}

\begin{example}[Log-concave Euclidean spaces]
We study the case where $E = \dR^{n}$, $d$ is the Euclidean distance and $\mu$ is a strictly log-concave probability measure.
By this we mean that $\mu(\dd x) = \e^{-V(x)} \dd x$, where $V \colon \dR^{n} \to \dR$ such that $V$ is $\cC^{2}$ and satisfying $\nabla^{2} V \geq K$ for some $K > 0$.
It is a consequence of~\cite[Proposition 4]{BakryEmeryDiffusionsHypercontractives} that such a condition on $V$ implies that the semigroup generated by the solution of the stochastic differential equation
\[
  \dd X_{t} = \sqrt{2} \dd B_{t} - \nabla V(X_{t}) \dd t,
\]
where $B$ is a Brownian motion on $\dR^{n}$, satisfies the curvature-dimension $CD(\infty,K)$ of Bakry-Emery and, therefore, holds the log-Sobolev inequality, for all $f \in \cC_{c}^{\infty}(\dR^{n})$,
\[
  \Ent_{\mu} f^{2} \leq \frac{2}{K} \int |\nabla f(x)|^{2} \mu(\dd x).
\]
Such an inequality implies the super-Poincaré of~\cite[Theorem 2.1]{WangSuperPoincare} that in turns implies that the self-adjoint operator $L = - \Delta + \nabla V \cdot \nabla$ has a purely discrete spectrum.
In that case, the $\lambda^{(k)}$ of~\eqref{metric_eigenvalues} corresponds to these eigenvalues and~\cite{Milman2015} showed that
\[
  \lambda^{(k)} \geq \lambda^{(k)}_{\gamma_{n,\rho}},
\]
where $\lambda^{(k)}_{\gamma_{n,\rho}}$ is the eigenvalues of $-\Delta_{\gamma_{n,\rho}}$ of the previous example.
\end{example}

\section{Extension to Markov chains}\label{section:Markov}
As in the classical case (see~\cite[Theorem 3.3]{Ledoux2001}), our continuous result admits a generalization on finite graphs or more broadly in the setting of Markov chains on a finite state space.
We consider a finite set $E$ and $X = (X_{n})_{n\in \dN}$ be a irreducible time-homogeneous Markov chain with state space $E$.
We write $p(x,y) = \dP(X_{1} = y | X_{0} = x)$ and we regard $p$ as a matrix.
We assume that $p$ admits a reversible probability measure $\mu$ on $E$ :  $p(x,y) \mu(x) = p(y,x) \mu(y)$ for all $x,y \in E$ (which implies in particular that $\mu$ is invariant).
The Markov kernel $p$ induces a graph structure on $E$ by the following procedure.
Set the elements of $E$ as the vertex of the graph and for $x,y \in E$ connect them with an edge if $p(x,y) > 0$.
As the chain is irreducible, this graph is connected.
We equip $E$ with the induced graph distance $d$.
We write $L = p - I$, where $I$ stands for the identity matrix.
The operator $-L$ is a symmetric positive operator on $\cL^{2}(\mu)$.
We let $\lambda^{(k)}$ be the eigenvalues of this operator.
Then, our~\cref{main-result} extends as follows:
\begin{theorem}\label{result-markov-chains}
For any $k\geq 1$ and for all sets $A_1,\ldots,A_k \subset E$ such that $\min_{i\neq j} d(A_i,A_j) \geq 1$ and $(\mu(A_1),\ldots,\mu(A_k)) \in \Delta_k$ the set $B=A_1\cup A_2\cup \cdots \cup A_k$ satisfies
\[
  \mu(B_{n}) \geq 1 - (1-\mu(B)) {\left(1 + \lambda^{(k)}\right)}^{-n},
\]
for all $1 \leq n \leq \frac{1}{2}\min_{i\neq j} d(A_i,A_j)$ where $\lambda^{(k)}$ is the $k$-th eigenvalue of the operator $-L$ acting on $\cL^{2}(\mu)$.
\end{theorem}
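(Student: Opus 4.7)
The plan is to mimic the proof of~\cref{main-result} in the discrete setting, where indicator functions already lie in $\cL^{2}(\mu)$ so no smooth approximation is needed. Set $B = A_{1} \cup \cdots \cup A_{k}$ and $A_{0} = E \setminus B_{1}$. Since $n\geq 1$, the hypothesis $n \leq \frac{1}{2}\min_{i\neq j} d(A_{i},A_{j})$ forces $\min_{i \neq j} d(A_{i},A_{j}) \geq 2$, so the sets $A_{0}, A_{1}, \dots, A_{k}$ are pairwise disjoint. The test functions $f_{i} = \mathbf{1}_{A_{i}}$, $0 \leq i \leq k$, are therefore pairwise orthogonal in $\cL^{2}(\mu)$ and span a $(k+1)$-dimensional subspace, which I would feed into the Courant-Fischer formula
\[
  \lambda^{(k)} \ = \ \inf_{\substack{V \subset \cL^{2}(\mu) \\ \dim V = k+1}} \sup_{f \in V \setminus \{0\}} \frac{\CE(f,f)}{\int f^{2}\,\dd\mu},
\]
where $\CE(f,f) = \frac{1}{2}\sum_{x,y \in E} {(f(x)-f(y))}^{2} p(x,y)\mu(x)$ is the Dirichlet form of $-L$.

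The crucial point is that $\CE$ diagonalises on these test functions: for every $a = (a_{0},\ldots,a_{k}) \in \dR^{k+1}$,
\[
  \CE\left(\sum_{i=0}^{k} a_{i} f_{i},\, \sum_{i=0}^{k} a_{i} f_{i}\right) \ = \ \sum_{i=0}^{k} a_{i}^{2}\, \CE(f_{i},f_{i}).
\]
Indeed, any pair $(x,y)$ with $p(x,y) > 0$ has $d(x,y) \leq 1$, so the separation $\min_{i \neq j} d(A_{i},A_{j}) \geq 2$ forbids $(x,y)$ from connecting two distinct sets $A_{i}, A_{j}$ with $i,j \geq 1$, while $d(A_{0}, A_{j}) \geq d(A_{0}, B) \geq 2$ for $j\geq 1$ (which follows from $A_{0} = E \setminus B_{1}$) similarly rules out any edge between $A_{0}$ and some $A_{j}$. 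The only pairs contributing to $\CE(f,f)$ are therefore those with exactly one endpoint in some $A_{i}$ and the other in the boundary layer $B_{1} \setminus B$, on which every $f_{j}$ vanishes, which yields the identity. Combined with reversibility, the same analysis gives $\CE(f_{i}, f_{i}) = \sum_{x \in A_{i},\, y \notin A_{i}} p(x,y)\mu(x) \leq \mu(A_{i,1}) - \mu(A_{i})$, as only $y \in A_{i,1}\setminus A_{i}$ can contribute.

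Plugging these estimates into Courant-Fischer gives $\lambda^{(k)} \leq \max_{i}\mu(A_{i,1})/\mu(A_{i}) - 1$, and applying~\cref{lemma:max_of_quotient} with $\epsilon = 1$ (it carries over verbatim to the discrete setting, as does~\cref{lemma:inclusion}) produces the one-step inequality $(1+\lambda^{(k)})(1-\mu(B_{1})) \leq 1 - \mu(B)$. Iterating this $n$ times --- successively applied to $(A_{1})_{j-1}, \dots, (A_{k})_{j-1}$ for $j = 1, \dots, n$ --- is legitimate under the hypothesis $n \leq \frac{1}{2}\min d(A_{i},A_{j})$: the $(j-1)$-fold enlargements still sit at mutual graph distance $\geq 2$, so the previous case analysis applies, and their measures, non-decreasing in $j$ while the enlargements remain pairwise disjoint, continue to lie in $\Delta_{k}$. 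The telescoping bound $(1 + \lambda^{(k)})^{n}(1 - \mu(B_{n})) \leq 1 - \mu(B)$ then rearranges into the announced inequality.

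The main obstacle is the verification of the diagonal identity for $\CE$: it relies on a clean case analysis of the configurations of a neighbouring pair $(x,y)$ in order to exclude, via the separation hypothesis, the configuration where $x$ and $y$ sit in two distinct sets $A_{i}, A_{j}$, which is the sole source of potential off-diagonal cross terms. Once this separation argument is in place, the remainder is a notationally lighter discrete transcription of the proof from~\cref{section:proofs}, since the smoothing parameter $p$ used there becomes unnecessary.
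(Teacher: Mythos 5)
Your proof is correct and follows essentially the same route as the paper's: indicator test functions $f_{0}=\mathbf{1}_{A_{0}},\dots,f_{k}=\mathbf{1}_{A_{k}}$ with $A_{0}=E\setminus B_{1}$, Courant--Fischer on the $(k+1)$-dimensional span, the separation hypothesis to kill the off-diagonal terms $\CE(f_{i},f_{j})$, the Dirichlet-form bound $\CE(f_{i})\leq\mu(A_{i,1})-\mu(A_{i})$, \cref{lemma:max_of_quotient} with $\epsilon=1$ to obtain the one-step estimate $(1+\lambda^{(k)})(1-\mu(B_{1}))\leq 1-\mu(B)$, and iteration.

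The one place where you diverge, and where your treatment is arguably cleaner, is the derivation of $\CE(\mathbf{1}_{A_{i}})\leq\mu(A_{i,1})-\mu(A_{i})$. You get it in one line by writing $\CE(\mathbf{1}_{A_{i}})=\sum_{x\in A_{i},\,y\notin A_{i}}p(x,y)\mu(x)$, noting that $p(x,y)>0$ forces $y\in A_{i,1}\setminus A_{i}$, and summing over $y$ with reversibility. The paper instead passes through the probabilistic identity $\CE(\mathbf{1}_{U})=\dP(X_{0}\in U,\,X_{1}\notin U)$ and the two-sided boundary $\partial U=(U_{1}\setminus U)\cup\left((U^{C})_{1}\setminus U^{C}\right)$, and then has to remove the outer-boundary contribution to land on $\mu(A_{i,1})-\mu(A_{i})$; that last reduction, as justified there by \cref{lemma:inclusion}, would require $\mu\left((A_{i}^{C})_{1}\right)\leq\mu(A_{i}^{C})$, which does not hold in general. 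The bound the paper ultimately uses is nevertheless the same as yours, since one can tighten $\dP(X_{0}\in U,\,X_{1}\notin U)$ directly to $\mu(U_{1}\setminus U)$ rather than to $\mu(\partial U)$; your direct computation simply makes this explicit and avoids the detour. Everything else --- orthogonality, the vanishing of cross terms (for which you correctly observe that $d(A_{0},A_{j})\geq 2$ since $A_{0}=E\setminus B_{1}$), the appeal to \cref{lemma:max_of_quotient,lemma:inclusion}, and the recursion together with its verification that membership in $\Delta_{k}$ propagates --- matches the paper.
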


\begin{proof}
  We let $\Pi(x,y) = p(x,y) \mu(x)$ and
  \[
    \CE(f,g) = \frac{1}{2} \sum (f(y)-f(x)) (g(y)-g(x)) \Pi(x,y) = \langle f, -Lg \rangle_{\mu}.
\]
  For any set $A$, we define the discrete boundary of $A$ as $\partial A = A_{1} \setminus A \cup {(A^{C})}_{1} \setminus A^{C}$.
  Let $(X_{n})$ be the Markov chain with transition kernel $p$ and initial distribution $\mu$.
  By reversibility of $\mu$, $(X_{0}, X_{1})$ is an exchangeable pair of law $\Pi$ whose the marginals are given by $\mu$.
  Then, for a set $U$, we have
  \[
    \CE(1_{U}) = \dE 1_{U}(X_{0}) (1_{U}(X_{0}) - 1_{U}(X_{1})) = \dP(X_{0} \in U, X_{1} \not \in U) \leq \dP(X_{1} \in \partial U) = \mu(\partial U).
  \]
Observe that if $d(U,V) \geq 1$, $U$ and $V$ are disjoint and $U \times V \not \in \supp \Pi$ so that $\CE(1_{U}, 1_{V}) = 0$.
By Courant-Fischer's min-max theorem
\[
  \lambda^{(k)} = \min_{\dim V = k+1} \max_{f \in V} \frac{\CE(f,f)}{\mu(f^{2})}.
\]
Choose sets $A_{1}, \dots, A_{k}$ with $d(A_{i}, A_{j}) \geq 2n$ ($i \neq j$) and $(\mu(A_{1}), \dots, \mu(A_{k})) \in \Delta_{k}$.
  Set $f_i = 1_{A_{i}}$.
  The $f_{i}$'s have disjoint support and so they are orthogonal in $L^{2}(\mu)$.
  By the previous variational representation of $\lambda^{(k)}$, we have
\[
  \lambda^{(k)} \leq%
  \sup_{a_i} \frac%
  {  \CE \left(\sum_{i=0}^{k} a_{i} f_{i} \right) }%
  { \int {\left(\sum_{i=0}^{k} a_{i} f_{i} \right)}^2 \dd \mu}%
  =%
  \sup_{a_{i}} \frac{\sum a_{i} a_{i'} \CE(f_{i}, f_{i'})}{\sum a_{i} a_{i'} \int f_{i} f_{i'} \dd \mu}
  =%
  \sup_{a_i} \frac%
  { \sum_{i=0}^{k} a^{2}_{i} \CE(f_{i}) }%
  { \sum_{i=0}^{k} a_{i} \int f_{i}^2 \dd \mu}.
  \]
  In other words,
  \[
    \lambda^{(k)} \leq \max_{i=0, \dots, k} \frac{\mu({(A_{i})}_{1}) +\mu({(A_{i}^{C})}_{1}) - 1}{\mu(A_{i})} \leq \frac{\mu({(A_{i})}_{1}) - \mu(A_{i})}{\mu(A_{i})},
  \]
  where the last inequality comes from the fact that, by~\cref{lemma:inclusion}, $\mu(E \setminus {(E \setminus A)}_{1}) \geq \mu(A)$.
  Consider the set $B = \cup_{i=1}^{k} A_{i}$ and choose $A_{0} = E \setminus B_{1}$.
  In that case, by~\cref{lemma:max_of_quotient} with $\epsilon = 1$, we have
  \[
    \max_{i=0, \dots, k} \frac{\mu({(A_{i})}_{1})}{\mu(A_{i})} \leq \frac{1 - \mu(B)}{1 - \mu(B_{1})}.
  \]
  Thus, we proved that
  \[
    (1 + \lambda^{(k)}) (1 - \mu(B_{1})) \leq (1 - \mu(B)).
  \]
  We derive the announced result by an immediate recursion.
\end{proof}

\section{Functional forms of the multiple sets concentration property}\label{sec:functional}
We investigate the functional form of the multi-sets concentration of measure phenomenon results obtained in~\cref{section:concentration,section:Markov}.

\begin{proposition}\label{prop:conc-equiv}
Let $(E,d)$ be a metric space equipped with a Borel probability measure $\mu$.
Let $\alpha_k : [0,\infty) \to [0,\infty)$.
The following properties are equivalent:
\begin{enumerate}
\item For all Borel sets $A_1,\ldots,A_k \subset E$ such that $(\mu(A_1),\ldots,\mu(A_k)) \in \Delta_k$, the set $A = A_1\cup \cdots \cup A_k$ satisfies 
\begin{equation}\label{eq:conc-alpha}
\mu(A_r) \geq 1- (1-\mu(A)) \alpha_k (r),\qquad \forall 0<r \leq \frac{1}{2}\min_{i\neq j} d(A_i,A_j).
\end{equation}
\item For all $1$-Lipschitz functions $f_1,\ldots,f_k : E \to \mathbb{R}$ such that the sublevel sets $A_i=\{f_i\leq 0\}$ are such that $(\mu(A_1),\ldots,\mu(A_k)) \in \Delta_k$, the function $f^* = \min (f_1,\ldots, f_k)$ satisfies
\[
\mu(f^* < r) \geq 1- \mu(f^*\leq 0) \alpha_k (r),\qquad \forall 0<r \leq \frac{1}{2}\min_{i\neq j} d(A_i,A_j).
\]  
\end{enumerate}
\end{proposition}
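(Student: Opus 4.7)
The plan is to prove the two implications separately, exploiting the standard dictionary between Borel sets and their distance functions. The key observations are: if $f$ is $1$-Lipschitz and non-positive on a set $A$, then $A_r \subseteq \{f < r\}$ by the Lipschitz estimate; and conversely, $\{d(\cdot, A) < r\} = A_r$ and $\{d(\cdot,A) \leq 0\} = \overline{A}$ for any $A \subseteq E$.

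For $(1) \Rightarrow (2)$, given $1$-Lipschitz $f_1,\ldots,f_k$, I would put $A_i = \{f_i \leq 0\}$, so that $(\mu(A_i)) \in \Delta_k$ by hypothesis. Any $x \in A_{i,r}$ admits $y \in A_i$ with $d(x,y) < r$, whence $f_i(x) \leq f_i(y) + d(x,y) < r$; thus $A_{i,r} \subseteq \{f_i < r\}$ and, taking unions, $A_r \subseteq \{f^* < r\}$. Combining this inclusion with the bound given by~(1) and using the identity $\mu(A) = \mu(f^* \leq 0)$ (together with the fact that $(\mu(A_i)) \in \Delta_k$ forces $\mu(A) \geq k/(k+1) \geq 1/2$, obtained by summing the $k$ inequalities $\mu(A_i) + \sum_j \mu(A_j) \geq 1$) immediately yields the inequality of~(2).

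For $(2) \Rightarrow (1)$, given $A_1,\ldots,A_k$ satisfying the hypothesis of~(1), I would take $f_i(x) = d(x, A_i)$. These are $1$-Lipschitz with $\{f_i < r\} = A_{i,r}$ (hence $\{f^* < r\} = A_r$) and sublevel sets $\{f_i \leq 0\} = \overline{A_i}$. The main item to verify is that $(\mu(\overline{A_i})) \in \Delta_k$: the hypothesis $d(A_i,A_j) > 0$ transfers to $d(\overline{A_i}, \overline{A_j}) > 0$, so the closures are pairwise disjoint and $\sum_i \mu(\overline{A_i}) \leq 1$, while the inequalities $a_i + \sum_j a_j \geq 1$ are preserved under the monotone enlargement $\mu(A_i) \leq \mu(\overline{A_i})$. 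Applying~(2) and using once more the inequality $\mu(\overline{A_i}) \geq \mu(A_i)$ then yields the conclusion of~(1). I do not expect any serious obstacle here: the proposition is essentially a formal translation between set- and function-level concentration, and the only mild subtlety — the passage from $A_i$ to $\overline{A_i}$ — is handled cleanly by disjointness of the closures and the monotonicity built into the definition of $\Delta_k$.
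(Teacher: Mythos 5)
Your outline correctly identifies the canonical dictionary ($A_i \leftrightarrow d(\cdot,A_i)$, $A_r \subset \{f^*<r\}$) and, to your credit, you notice and carefully handle a point the paper glosses over: for $f_i = d(\cdot, A_i)$ one has $\{f_i\leq 0\} = \overline{A_i}$, not $A_i$, and you correctly verify that $(\mu(\overline{A_1}),\ldots,\mu(\overline{A_k})) \in \Delta_k$ via disjointness of closures and monotonicity of the constraints. That part is a genuine improvement on the paper's terse \enquote{it is clear}.

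However, the $(2)\Rightarrow(1)$ direction does not close as written, and the problem is already visible in your $(1)\Rightarrow(2)$ argument. The statement of (2) has $\mu(f^*\leq 0)$ where the analogue of (1) would have $1-\mu(f^*\leq 0)$; this is almost certainly a typo in the paper. Your parenthetical remark in $(1)\Rightarrow(2)$ --- that $\Delta_k$ forces $\mu(A)\geq k/(k+1)\geq 1/2$ --- is precisely what is needed to bridge this discrepancy (it gives $1-\mu(A)\leq\mu(A)$, so the bound $1-(1-\mu(A))\alpha_k(r)$ from (1) implies the weaker bound $1-\mu(A)\alpha_k(r)$ required by the literal (2)). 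But this bridge is a one-way street. In the converse direction, applying the literal (2) to $f_i=d(\cdot,A_i)$ yields
\[
\mu(A_r) \;\geq\; 1 - \mu\bigl(\textstyle\cup_i \overline{A_i}\bigr)\,\alpha_k(r),
\]
and since $\mu(\cup_i \overline{A_i}) \geq \mu(A) \geq 1/2 \geq 1-\mu(A)$, the right-hand side is \emph{smaller} than the target $1-(1-\mu(A))\alpha_k(r)$. The inequality $\mu(\overline{A_i})\geq\mu(A_i)$ that you invoke \enquote{once more} pushes the bound further down, not up, so the conclusion of (1) does not follow. The resolution is to read (2) with the factor $\bigl(1-\mu(f^*\leq 0)\bigr)$, matching (1) exactly; with that reading your $(2)\Rightarrow(1)$ argument is correct (the inclusion $\cup_i\overline{A_i}\supseteq A$ gives $1-\mu(\cup_i\overline{A_i})\leq 1-\mu(A)$, which now works in the right direction), and the $\mu(A)\geq 1/2$ observation in $(1)\Rightarrow(2)$ becomes unnecessary. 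You should state explicitly which version of (2) you are proving and be consistent about it in both directions.
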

Together with~\cref{main-result} or~\cref{result-markov-chains}, one thus sees that the presence of multiple wells can improve the concentration properties of a Lipschitz function.

\begin{proof}
It is clear that (2) implies (1) when applied to $f_i(x) = d(x,A_i)$, in which case $A_i=\{f_i\leq 0\}$ and $f^*(x) = d(x,A)$.
The converse is also very classical.
First, observe that $\{f^*<r\} = \cup_{i=1}^k\{f_i<r\}$.
Then, since $f_i$ is $1$-Lipschitz, it holds $A_{i,r} \subset \{f_i <r\}$ with $A_i=\{f_i\leq 0\}$ and so letting $A=A_1\cup\cdots \cup A_k$, it holds $A_r\subset \{f^*<r\}$.
Therefore, applying (1) to this set $A$ gives (2).
\end{proof}
\noindent
When~\cref{eq:conc-alpha} holds, we will say that the probability metric space $(E,d,\mu)$ satisfies the multi-set concentration of measure property of order $k$ with the concentration profile $\alpha_k$.

In the usual setting ($k=1$), the concentration of measure phenomenon implies deviation inequalities for Lipschitz functions around their median.
The next result generalizes this well known fact to $k>1$.

\begin{proposition}\label{prop:conc-quantile} Let $(E,d,\mu)$ be a probability metric space satisfying the multi-set concentration of measure property of order $k$ with the concentration profile $\alpha_k$ and $f:E\to \mathbb{R}$ be a $1$-Lipschitz function.
If $I_1,\ldots, I_k\subset \mathbb{R}$ are $k$ disjoint Borel sets such that $(\mu(f\in I_1),\ldots,\mu(f\in I_k)) \in \Delta_k$, then it holds
\[
\mu \left(f \in \cup_{i=1}^k I_{i,r}\right) \geq 1 - (1- \mu(f \in \cup_{i=1}^k I_{i})) \alpha_k(r),\qquad \forall 0<r \leq \frac{1}{2}\min_{i\neq j} d(I_i,I_j)
\]   
\end{proposition}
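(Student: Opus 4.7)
The plan is to reduce Proposition \ref{prop:conc-quantile} to Proposition \ref{prop:conc-equiv}(1) by pulling back the subsets $I_i$ of $\mathbb{R}$ to subsets $A_i$ of $E$ via the Lipschitz function $f$, and checking that all hypotheses transfer correctly.

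First, I would set $A_i := f^{-1}(I_i) = \{x \in E : f(x) \in I_i\}$ for $i = 1, \ldots, k$, and $A := \cup_{i=1}^k A_i = f^{-1}(\cup_{i=1}^k I_i)$. Since the $I_i$'s are pairwise disjoint, so are the $A_i$'s, and we have $\mu(A_i) = \mu(f \in I_i)$ as well as $\mu(A) = \mu(f \in \cup_i I_i)$. In particular, the hypothesis $(\mu(f \in I_1), \ldots, \mu(f \in I_k)) \in \Delta_k$ becomes $(\mu(A_1), \ldots, \mu(A_k)) \in \Delta_k$.

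Next, I would exploit the 1-Lipschitz property of $f$ in two ways. For any $i \neq j$ and $x \in A_i$, $y \in A_j$, one has $d(x,y) \geq |f(x) - f(y)| \geq d(I_i, I_j)$, hence $d(A_i, A_j) \geq d(I_i, I_j)$ and thus $\tfrac{1}{2}\min_{i \neq j} d(A_i, A_j) \geq \tfrac{1}{2}\min_{i \neq j} d(I_i, I_j)$. Similarly, for $y \in (A_i)_r$ there exists $x \in A_i$ with $d(x,y) < r$, so that $|f(y) - f(x)| < r$ and $f(y) \in I_{i,r}$; this gives $(A_i)_r \subset f^{-1}(I_{i,r})$, and taking unions yields $A_r = \cup_i (A_i)_r \subset f^{-1}(\cup_i I_{i,r})$.

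Finally, for $0 < r \leq \tfrac{1}{2}\min_{i \neq j} d(I_i, I_j) \leq \tfrac{1}{2}\min_{i \neq j} d(A_i, A_j)$, Proposition \ref{prop:conc-equiv} applied to $A_1, \ldots, A_k$ gives
\[
\mu(A_r) \geq 1 - (1 - \mu(A))\alpha_k(r),
\]
and the inclusion $A_r \subset f^{-1}(\cup_i I_{i,r})$ together with $\mu(A) = \mu(f \in \cup_i I_i)$ immediately translates this into the desired bound on $\mu(f \in \cup_i I_{i,r})$. There is no real obstacle here: the argument is essentially the standard "Lipschitz functions transport concentration from sets to level sets" mechanism, generalized to the multi-set setting, and both the distance lower bound and the enlargement inclusion follow from a one-line use of the Lipschitz property.
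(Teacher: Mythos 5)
Your proof is correct and, in substance, matches the paper's: the paper pushes the measure forward to $(\mathbb{R}, |\cdot|, \nu)$ with $\nu$ the image of $\mu$ under $f$ and invokes the transferred concentration property there (leaving the verification to the reader), whereas you pull the sets $I_i$ back to $A_i = f^{-1}(I_i)$ and apply the original concentration property in $E$. These are the same argument seen from dual perspectives, and your version has the minor virtue of spelling out the two Lipschitz estimates ($d(A_i,A_j) \geq d(I_i,I_j)$ and $(A_i)_r \subset f^{-1}(I_{i,r})$) that the paper's one-line proof leaves implicit.
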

\begin{proof}
Let $\nu$ be the image of $\mu$ under the map $f$.
Since $f$ is $1$-Lipschitz, the metric space $(\mathbb{R}, |\,\cdot\,|, \nu)$ satisfies the multi-set concentration of measure property of order $k$ with the same concentration profile $\alpha_k$ as $\mu$.
Details are left to the reader.
\end{proof}
Let us conclude this section by detailing an application of potential interest in approximation theory. 

Suppose that $f : E \to \dR$ is some $1$-Lipschitz function and $A_1,\ldots ,A_k$ are (pairwise disjoint) subsets of $E$ such that $(\mu(A_1),\ldots, \mu(A_k)) \in \Delta_k$. Let us assume that the restrictions $f_{|A_i}$, $i\in \{1,\ldots, k\}$ are known and that one wishes to estimate or reconstruct $f$ outside $A= \cup_{i=1}^k A_i$. To that aim, one can consider an explicit $1$-Lipschitz extension of $f_{|A}$, that is to say a $1$-Lipschitz function $g:E \to \dR$ (constructed based on our knowledge of $f$ on $A$ exclusively) such that $f=g$ on $A$. There are several canonical ways to perform the extension of a Lipschitz function defined on a sub domain (known as Kirszbraun-McShane-Whitney extensions \cite{Kirszbraun1934,McShane1934,Whitney1934}). One can consider for instance the functions
\[
g_+(x) = \inf_{y\in A}\{f(y)+d(x,y)\}\qquad \text{or}\qquad g_{-}(x) = \sup_{y\in A}\{ f(y)-d(x,y)\},\qquad x\in E.
\]
It is a very classical fact that functions $g_-$ and $g_+$ are $1$-Lipschitz extensions of $f_{|A}$ and moreover that any extension $g$ of $f_{|A}$ satisfies $g_{-}\leq g\leq g_+$ (see \textit{e.g} \cite{Heinonen2005}).

The following simple result shows that, for any $1$-Lipschitz extension $g$ of $f_{|A}$, the probability of error $\mu(|f-g|>r)$ is controlled by the multi-set concentration profile $\alpha_k$. In particular, in the framework of our \cref{main-result}, this probability of error is expressed in terms of $\lambda^{(k)}$. 

\begin{proposition}\label{prop:conc-extension}Let $(E,d,\mu)$ be a probability metric space satisfying the multi-set concentration of measure property of order $k$ with the concentration profile $\alpha_k$ and $f:E\to \mathbb{R}$ be a $1$-Lipschitz function. Let $A_1,\ldots A_k$ be subsets of $E$ such that $(\mu(A_1),\ldots, \mu(A_k)) \in \Delta_k$ ; then for any $1$-Lipschitz extension $g$ of $f_{|A}$, it holds
\[
\mu(|f-g|\geq r) \leq (1-\mu(A)) \alpha_k(r/2),\qquad \forall 0<r\leq \min_{i\neq j} d(A_i,A_j).
\]
\end{proposition}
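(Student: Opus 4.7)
The plan is to reduce the event $\{|f-g|\geq r\}$ to the complement of a suitable enlargement of $A$, and then apply the multi-set concentration hypothesis to that enlargement. The only analytic input needed is the 1-Lipschitz property of $f$ and $g$, together with the fact that $f=g$ on $A$.

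First I would establish the inclusion
\[
\{|f-g|\geq r\} \subset E \setminus A_{r/2}.
\]
To see this, suppose $x \in A_{r/2}$. Then $x \in A_{i,r/2}$ for some $i$, so there exists $y\in A_i \subset A$ with $d(x,y) < r/2$. Since $g$ is a 1-Lipschitz extension of $f_{|A}$, we have $g(y)=f(y)$, and therefore by the triangle inequality and the 1-Lipschitz property of $f$ and $g$,
\[
|f(x)-g(x)| \leq |f(x)-f(y)| + |g(y)-g(x)| \leq 2\,d(x,y) < r,
\]
which shows $x \notin \{|f-g|\geq r\}$. Taking contrapositives yields the claimed inclusion.

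Next I would invoke the multi-set concentration property at scale $r/2$. The hypothesis $r \leq \min_{i\neq j} d(A_i,A_j)$ gives $r/2 \leq \tfrac{1}{2}\min_{i\neq j} d(A_i,A_j)$, so the profile bound~\eqref{eq:conc-alpha} applies and yields
\[
\mu(A_{r/2}) \geq 1 - (1-\mu(A))\,\alpha_k(r/2).
\]
Combining this with the inclusion from the first step gives
\[
\mu(|f-g|\geq r) \leq \mu(E\setminus A_{r/2}) \leq (1-\mu(A))\,\alpha_k(r/2),
\]
which is the announced inequality.

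The whole argument is quite short, and the only conceptual point — which I would not really call an obstacle — is recognizing that one must pay a factor of $2$ in the distance (hence the $r/2$ in the profile) because the triangle inequality combines the oscillations of both $f$ and $g$. Note that one need not invoke the explicit Kirszbraun--McShane--Whitney extensions $g_\pm$ mentioned before the statement; the bound holds uniformly over all 1-Lipschitz extensions $g$ because only the local Lipschitz control near $A$ is used.
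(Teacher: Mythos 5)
Your proposal is correct and follows essentially the same route as the paper: both establish the inclusion $\{|f-g|\geq r\}\subset E\setminus A_{r/2}$ via the Lipschitz property of $f$ and $g$ and the fact that they agree on $A$ (the paper phrases this as $h=|f-g|$ being $2$-Lipschitz and vanishing on $A$, hence $h\leq 2d(\cdot,A)$), and then apply the concentration profile at scale $r/2$.
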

\proof
The function $h:E\to \dR$ defined by $h(x) = |f-g|(x)$, $x \in E$,  is $2$-Lipschitz and vanishes on $A$. Therefore, for any $x \in E $ and $y \in A$, it holds $h(x) \leq h(y)+ 2d(x,y) = 2d(x,y)$. Optimizing over $y\in A$ gives that $h(x) \leq 2 d(x,A)$. Therefore $\{ h \geq r\} \subset \{x : d(x,A) \geq r/2\} = \left(A_{r/2}\right)^c$ and so, if $0<r\leq \min_{i\neq j} d(A_i,A_j)$, it holds
\[
\mu(|f-g|\geq r) \leq (1-\mu(A)) \alpha_k(r/2).
\]
\endproof

\begin{remark}
Let us remark that \cref{prop:conc-equiv,prop:conc-quantile,prop:conc-extension} can be immediately extended under the following more general (but notationally heavier) multi-set concentration of measure assumption: there exists functions $\alpha_k:[0,\infty)\to [0,\infty) $ and $\beta_k: [0,\infty)^k \to [0,\infty]$ such that for all Borel sets $A_1,\ldots,A_k \subset E$, the set $A = A_1 \cup\cdots\cup A_k$ satisfies
\[
\mu(A_r) \geq 1 - \beta_k(\mu(A_1),\cdots,\mu(A_k)) \alpha_k(r),\qquad \forall 0<r\leq \frac{1}{2}\min_{i\neq j} d(A_i,A_j).
\] 
This framework contains the preceding one, by choosing $\beta_k(a)=1-\sum_{i=1}^ka_i$ if $a=(a_1,\ldots,a_k) \in \Delta_k$ and $+\infty$ otherwise. It also contains the concentration bounds obtained in \cref{proposition:alternative-result}, corresponding respectively to 
\[
\beta_k(a) = \frac{1-\sum_{i=1}^k a_i}{\prod_{i=1}^k a_i}, \text{ and }\beta_k(a) = \frac{1-\sum_{i=1}^k a_i}{\left(\sum_{i=1}^k a_i\right)^{\sum_{i=1}^k a_i / \min(a_1,\cdots,a_k)}},\quad
a = (a_1,\ldots,a_k).
\] 
\end{remark}

\section{Open questions}
\label{section:questions}
We list open questions related to the multi-set concentration of measure phenomenon.
\subsection{Gaussian multi-set concentration}
Using the terminology introduced in~\cref{sec:functional},~\cref{main-result} and the material exposed in~\cref{sec:examples} tell us that, if $\mu$ has a density of the form $e^{- V}$ with respect to Lebesgue measure on $\dR^n$ with a smooth function $V$ such that $\mathrm{Hess}\,V \geq \rho >0$, then the probability metric space $(\dR^n, |\,\cdot\,|,\mu)$ satisfies the multi-set concentration of measure property of order $k$ with the concentration profile
\[
\alpha_k(r) = \exp\left(- c \min(r^2 \lambda^{(k)}_{\gamma_n,\rho} ; r \sqrt{\lambda^{(k)}_{\gamma_n,\rho}})\right),\qquad r\geq0,
\]
where $\lambda^{(k)}_{\gamma_n,\rho}$ denotes the $k$th eigenvalue of the $n$-dimensional centered Gaussian measure with covariance matrix $\rho^{-1}\mathrm{Id}$.
Since the measure $\mu$ satisfies the log-Sobolev inequality, it is well known that it satisfies a (classical) Gaussian concentration of measure inequality.
Therefore, it is natural to conjecture that $\mu$ satisfies a multi-set concentration of measure property of order $k\geq 1$ with a profile of the form
\[
\beta_k(r) = \exp\left(- C_{k,\rho,n} r^2 \right),\qquad r\geq0,
\] 
for some constant $C_{k,\rho,n}$ depending solely on its arguments.
In addition, it would be interesting to see how usual functional inequalities (Log-Sobolev, transport-entropy, \ldots) can be modified to catch such a concentration of measure phenomenon.

\subsection{Equivalence between multi-set concentration and lower bounds on eigenvalues in non-negative curvature}
Let us quickly recall the main finding of E. Milman~\cite{Milman2009, Milman2010}, that is, under non-negative curvature assumptions, a concentration of measure estimate implies a bound on the spectral gap.
Let $\mu$ be a probability measure with a density of the form $e^{-V}$ on a smooth connected Riemannian manifold $M$ with $V$ a smooth function such that 
\begin{equation}\label{eq:curvature}
\mathrm{Ric}+\mathrm{Hess}\,V\geq0.
\end{equation}
Assume that $\mu$ satisfies a concentration inequality of the form: for all $A \subset M$ such that $\mu(A)\geq1/2$
\[
\mu(A_r) \geq 1 - \alpha(r),\qquad r\geq0,  
\]   
where $\alpha$ is a function such that $\alpha(r_o)<1/2$ for at least one value $r_o>0$.
Then, letting $\lambda^{(1)}$ be the first non zero eigenvalue of the operator $- \Delta + \nabla V\cdot\nabla$, it holds $\lambda^{(1)} \geq \frac{1}{4}{\left(\frac{1-2\alpha(r_o)}{r_o}\right)}^{2}$.
It would be very interesting to extend Milman's result to a multi-set concentration setting.
More precisely, if $\mu$ satisfies the curvature condition~\eqref{eq:curvature} and the multi-set concentration of measure property of order $k$ with a profile of the form $\alpha_k(r) = \exp(- \min(a r^2,\sqrt{a}r))$, $r\geq0$, can we find a universal function $\varphi_{k}$ such that $\lambda^{(k)} \geq \varphi_k(a)$?

This question already received some attention in recent works by Funano and Shioya~\cite{Funano2017, FunanoShioya2013}.
In particular, let us mention the following improvement of the Chung-Grigor'yan-Yau inequality obtained in~\cite{Funano2017}.
There exists a universal constant $c>1$ such that if $\mu$ is a probability measure satisfying the non-negative curvature assumption~\eqref{eq:curvature}, it holds: for any family of sets $A_0,A_1,\ldots,A_l$ with $1\leq l\leq k$
\begin{equation}\label{eq:CGYimproved}
\lambda^{(k)} \leq c^{k-l+1}\frac{1}{\min_{i \neq j } d^2(A_{i},A_{j})} \max_{i \neq j} \log {(\frac{4}{\mu(A_i)\mu(A_j)})}^{2}.
\end{equation}
Note that the difference with~\eqref{eq:CGY} is that $\lambda^{(k)}$ is estimated by a reduced number of sets.
Using~\eqref{eq:CGYimproved} (with $l=1$) together with Milman's result recalled above, Funano showed that there exists some constant $C_k$ depending only on $k$ such that under the curvature condition~\eqref{eq:curvature}, it holds $\lambda^{(k)} \leq C_k \lambda^{(1)}$ (recovering the main result of~\cite{FunanoShioya2013}).
The constant $C_k$ is explicit (contrary to the constant of~\cite{FunanoShioya2013}) and grows exponentially when $k\to \infty$.
This result has been then improved by Liu~\cite{Liu2014}, where a constant $C_k = O(k^2)$ has been obtained.
As observed by Funano~\cite{Funano2017}, a positive answer to the open question stated above would yield that under~\eqref{eq:curvature} the ratios $\lambda^{(k+1)}/\lambda^{(k)}$ are bounded from above by a universal constant.

\bibliographystyle{plain}

\end{document}